\theoremstyle{plain}
\newtheorem{proposition}{Proposition}[section]
\numberwithin{equation}{section}
\begin{document}
	
\title {Normalized Solutions for Schr\"odinger-Bopp-Podolsky Systems
	with Critical Choquard-Type Nonlinearity on Bounded Domains}
\author{Li Chen,\quad
	Li Wang\footnote{\textit{Corresponding author.}
	chenlimath0318@163.com (L. Chen), wangli.423@163.com (L. Wang).}}
\affil{College of Science, East China Jiaotong University, Nanchang, 330013, China}
\maketitle
	
	\begin{abstract}
In this paper, we study normalized solutions for the following critical Schr\"odinger-Bopp-Podolsky system:
\[
	\begin{cases}
	-\Delta u + q(x)\phi u = \lambda u + |u|^{p-2}u + \bigl(I_\alpha * |u|^{3+\alpha}\bigr)|u|^{1+\alpha}u, & \text{in } \Omega_r, \\
	-\Delta\phi + \Delta^2\phi = q(x)u^2, & \text{in } \Omega_r,
	\end{cases}
\]
where \(\Omega_r \subset \mathbb R^3\) is a smooth bounded domain, \(p \in \left(2, \frac{8}{3}\right)\), $q(x) \in C(\bar\Omega_r) \backslash \{0\}$ and \(\lambda \in \mathbb R\) is the Lagrange multiplier associated with the constraint \(\int_{\Omega_r} |u|^2 \d x = b^2\) for some \(b > 0\). Here \(\alpha > 0\), \(I_\alpha\) denotes the Riesz potential, and the domain parameter \(r\) reflects the size of \(\Omega_r\) whose precise definition will be given in Section 3.
By applying a special minimax principle together with a truncation technique, we prove that there exists \(b^* > 0\) such that the system admits multiple normalized solutions whenever \(b \in (0, b^*)\) under Navier boundary conditions.
	\end{abstract}

	{\footnotesize {\bfseries Keywords: }	Schr\"odinger-Bopp-Podolsky system, Normalized solutions, Critical Choquard nonlinearity.
		
		{\bf 2020 MSC:}	35J60, 35J50, 35B38, 35J20.
	}

	\section{Introduction and main result}

\qquad
Independently developed by Bopp~\cite{1} and Podolsky~\cite{2}, the Bopp-Podolsky theory constitutes a second-order gauge field theory for electromagnetism. It was originally proposed to resolve the well-known infinity problem inherent in the classical Maxwell model. For subsequent developments and related results within this framework, we refer to~\cite{3,4,5,6,7,8}.

Within the classical electrostatic formulation, the potential $\phi$ generated by a charge density $\rho$ satisfies the Poisson equation
\[
-\Delta\phi=\rho \quad \text{in } \mathbb R^3.
\]
In the case of a point charge located at \(x_{0}\in\mathbb R^3\), i.e., \(\rho=4\pi\delta_{x_{0}}\), the solution becomes \(\phi(x)=|x-x_{0}|^{-1}\). Consequently, the associated electrostatic energy diverges:
\[
\int_{\mathbb R^3}|\nabla\phi|^2\,\d x=+\infty.
\]
To remedy this divergence, Bopp and Podolsky introduced a modified field equation
\[
-\Delta\phi+a^2\Delta^2\phi=\rho \quad \text{in } \mathbb R^3,
\]
where \(a>0\) is the Bopp-Podolsky parameter. For the same point charge \(\rho=4\pi\delta_{x_{0}}\), the explicit solution of the modified equation is
\[
\mathcal{K}(x)=\frac{1-e^{-|x|/a}}{|x|}, \qquad x\in\mathbb R^3.
\]
This potential remains finite at the origin and satisfies the finiteness condition
\[
\int_{\mathbb R^3}|\nabla\mathcal{K}|^2\,\d x+a^2\int_{\mathbb R^3}|\Delta\mathcal{K}|^2\,\d x<+\infty.
\]
Thus, within the Bopp-Podolsky framework, the electrostatic field produced by a point charge possesses finite energy when measured by the BP norm \(\int_{\mathbb R^3}\big(|\nabla\phi|^2+a^2|\Delta\phi|^2\big)\d x\).

From an electrostatic perspective, coupling the Bopp-Podolsky equation with a nonlinear Schr\"odinger equation is a natural step~\cite{9,16}. The resulting coupled systems, referred to as Schr\"odinger-Bopp-Podolsky systems, can be considered as generalizations of the well-known Schr\"odinger-Poisson or Schr\"odinger-Maxwell frameworks.

In recent years, coupled systems involving nonlinear Schr\"odinger equations and various electromagnetic field theories have attracted considerable research interest. Among them, the Schr\"odinger-Bopp-Podolsky system, serving as a generalization of the classical Schr\"odinger-Poisson model, has been a subject of significant attention in mathematical physics. This system arises from coupling the Schr\"odinger equation with the Bopp-Podolsky electromagnetic theory, a second-order gauge theory proposed to resolve the energy divergence of point charges inherent in the classical Maxwell theory. In the electrostatic framework, the system can be formulated as:
\begin{equation}
	\begin{cases}
		-\Delta u + \omega u + q^2\phi u = |u|^{p-2}u, & \text{in } \mathbb R^3,\\
		-\Delta\phi + a^2\Delta^2\phi = 4\pi u^2, & \text{in } \mathbb R^3,
	\end{cases}
	\label{eq:1.1*}
\end{equation}
where \(a > 0\) is the Bopp-Podolsky parameter, \(\omega > 0\), \(q \neq 0\), and the nonlinear exponent satisfies \(p \in (2,6)\).

In paper~\cite{9}, d'Avenia and Siciliano presented the first systematic study of standing waves for the Schr\"odinger-Bopp-Podolsky system~\eqref{eq:1.1*} in $\mathbb R^3$.
 By variational methods, the authors constructed the corresponding energy functional and addressed the lack of compactness caused by nonlinear growth and translation invariance through techniques such as truncation, the monotonicity trick, and topological tools.

In recent years, Schr\"odinger-type equations with nonlocal terms have attracted considerable attention in mathematical physics. Among these, the Schr\"odinger-Bopp-Podolsky system is of particular interest due to its physical background in generalized electrodynamics. In the existing literature, Figueiredo and Siciliano in~\cite{10} conducted a thorough investigation on the multiplicity of solutions for this system under positive potential \( V \). 
The system describes the electrostatic interaction of a charged particle within the Bopp-Podolsky framework and takes the form:
\[
\begin{cases}
-\varepsilon^2 \Delta u + V u + \lambda \phi u + f(u) = 0, \\
-\varepsilon^2 \Delta \phi + \varepsilon^4 \Delta^2 \phi = u^2.
\end{cases}
\]
The authors employed the Krasnoselski genus theory to prove that the system admits infinitely many solutions whose energy and norm both tend to infinity, and the associated Lagrange multipliers tend to \( -\infty \).
Moreover, under some appropriate assumptions of $f$ and $V$, they obtained a
ground state solution via minimization, which can be chosen as a negative solution.
They provided a robust framework for studying multi-solution phenomena in nonlocal Schr\"odinger-type systems and inspires further investigation into problems with sign-changing or degenerate nonlinearities.

Recently, Li, Pucci and Tang in~\cite{15} investigated the following system with critical Sobolev exponent:
\[
\begin{cases}
-\Delta u + V(x)u + q^2 \phi u = \mu |u|^{p-1}u + |u|^4 u & \text{in } \mathbb R^3, \\
-\Delta \phi + a^2 \Delta^2 \phi = 4\pi u^2 & \text{in } \mathbb R^3,
\end{cases}
\]
where \(\mu > 0\) and \(2 < p < 5\).
Taking some technical assumptions to the potential $V$,
they established the existence of a nontrivial ground state solution. Their approach relied on the Poho\v zaev-Nehari manifold method, combined with Br\'ezis-Nirenberg-type arguments, the monotonicity trick, and a global compactness lemma. They proved that for \(p \in (3,5)\), the system admits a ground state for any \(\mu > 0\), while for \(p \in (2,3]\), a ground state exists provided \(\mu\) is sufficiently large. This work extends earlier results to the critical case and provides a foundation for further studies on nonlinearly coupled field systems with lack of compactness and translation invariance.

In studies on the Schr\"odinger-Bopp-Podolsky system, besides the critical exponent case, Silva and Siciliano in~\cite{18} investigated the following system:
\[
\begin{cases}
-\Delta u + \omega u + q^2 \phi u = |u|^{p-2}u, & \text{in }\mathbb R^3,\\
-\Delta \phi + a^2 \Delta^2 \phi = 4\pi u^2, & \text{in }\mathbb R^3,
\end{cases}
\]
where \( p \in (2,3] \), \( a, \omega > 0 \), and \( q \neq 0 \). By employing fibering map analysis, the Nehari manifold method, and the Mountain Pass Theorem, they proved the existence of two critical parameter values \( q^* \) and \( q_0^* \): when \( q > q^* \), the system admits no nontrivial solution; whereas for \( q < q_0^* \), the system possesses two positive solutions in the radial function space \( H_r^1(\mathbb R^3) \), one being a global or local minimizer and the other a Mountain Pass type solution. This result reveals the crucial role of the charge parameter \( q \) on the structure and existence of solutions in the subcritical growth range.
For more research paper on the Schr\"odinger-Bopp-Podolsky system in the whole
space, interested authors may refer to~\cite{11,14,16}.

The study of Schr\"odinger-Bopp-Podolsky systems has also attracted considerable attention on bounded domains. Afonso and Siciliano in~\cite{19}
studied the following system:
\begin{equation}
	\begin{cases} 
-\Delta u + q\phi u - \kappa |u|^{p-2}u = \omega u, & \text{in } \Omega, \\ 
- \Delta \phi + \Delta^2 \phi = qu^2, & \text{in } \Omega.
	\end{cases}
\end{equation}
The system devotes under Neumann boundary conditions \(\frac{\partial \phi}{\partial \mathbf{n}} = h_1,\ \frac{\partial \Delta \phi}{\partial \mathbf{n}} = h_2\) and subject to the normalization constraint \(\int_{\Omega} u^2 \d x = 1\). Under the assumptions \(\kappa > 0,\; p \in (2, \frac{10}3)\) and provided that the boundary integral combination \(\alpha = \int_{\partial \Omega} h_2 \d s - \int_{\partial \Omega} h_1 \d s\) satisfies \(\inf_{\Omega} q < \alpha < \sup_{\Omega} q\) together with \(|q^{-1}(\alpha)| = 0\), they proved, via a combination of variational methods and topological theory, that the problem admits infinitely many solutions \((u_n, \omega_n, \phi_n)\) with \(\|\nabla u_n\|_2 \to +\infty\).

Based on the above work, we consider the following Schr\"odinger-Bopp-Podolsky system on a smooth bounded domain \(\Omega_r \subset \mathbb R^3\):
\begin{equation}
	\begin{cases}
-\Delta u + q(x) \phi u = \omega u + |u|^{p-2}u + (I_\alpha * |u|^{3+\alpha})|u|^{1+\alpha}u, & \text{in } \Omega_r, \\[4pt]
-\Delta \phi + \Delta^2 \phi = q(x) u^2, & \text{in } \Omega_r, \\[4pt]
u = 0, & \text{on } \partial\Omega_r, \\[4pt]
\int_{\Omega_r} u^2 \d x = b^2, & \text{in } \Omega_r.
\end{cases}
\label{eq:1.1}
\end{equation}
In this system, \(\lambda \in \mathbb R\) is an unknown frequency arising as a Lagrange multiplier, \(b > 0\) is a prescribed mass, and the function \(q \in C(\overline{\Omega}) \setminus \{0\}\) represents a nonuniform charge distribution.
For the electrostatic potential $\phi$,
we impose homogeneous Navier-type boundary conditions:
\begin{equation}
	\phi = 0, \quad \text{on } \partial\Omega_r.
	\label{eq:1.2}
\end{equation}
\begin{equation}
	\Delta\phi = 0 \quad \text{on } \partial\Omega_r.
	\label{eq:1.3}
\end{equation}
The choice of zero boundary data is made here for technical simplicity.

The primary research finding of this paper is presented as follows:

	\begin{theorem}[label=thm:1.1]
	For any given $k \in \mathbb N$, there exist a constant $b^* > 0$, such that for every $b \in (0,b^*)$, the equation~\eqref{eq:1.1} subject to the boundary conditions~\eqref{eq:1.2}-\eqref{eq:1.3} admits at least k solutions
	\[
	(u_j, \phi_j, \lambda_j) \in H_0^1(\Omega_r) \times H^2(\Omega_r) \times \mathbb R,
	\]
	satisfying the constraint
	\[
	\int_{\Omega_r} u_j^2 \d x = b^2
	\]
	for any $j = 1, 2, \cdots, k$.
\end{theorem}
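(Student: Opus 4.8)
The plan is to remove the field variable $\phi$ through the second equation, reduce the problem to a single even functional constrained to the $L^2$-sphere, and then extract $k$ critical points by a genus-based minimax rendered compact by a truncation that is ultimately shown to be inactive. First I would solve the field equation: for fixed $u\in H_0^1(\Omega_r)$ the operator $-\Delta+\Delta^2$ under the Navier conditions \eqref{eq:1.2}--\eqref{eq:1.3} is positive definite and self-adjoint, so $-\Delta\phi+\Delta^2\phi=q(x)u^2$ has a unique solution $\phi_u\in H^2(\Omega_r)\cap H_0^1(\Omega_r)$, obtainable either by Lax--Milgram on $H^2\cap H_0^1$ or by expanding in the Dirichlet eigenbasis of $-\Delta$, which simultaneously diagonalizes $-\Delta+\Delta^2$ precisely because the Navier data kill the boundary terms. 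Testing the equation against $\phi_u$ gives the key identity $\int_{\Omega_r}q\phi_u u^2\,\d x=\int_{\Omega_r}\bigl(|\nabla\phi_u|^2+|\Delta\phi_u|^2\bigr)\d x\ge 0$, so the coupling term is nonnegative even though $q$ changes sign; I would also record $\|\phi_u\|\lesssim\|u\|^2$ and the weak continuity of $u\mapsto\phi_u$. Substituting $\phi_u$ back yields the reduced $C^1$ functional $J_b$ on $S_b=\{u\in H_0^1(\Omega_r):\int_{\Omega_r}u^2\,\d x=b^2\}$, which is even in $u$, and whose constrained critical points solve \eqref{eq:1.1} with $\lambda$ the Lagrange multiplier.

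The obstruction is the upper-critical Choquard term, which is Sobolev-critical (its natural integrability is exactly $L^6=L^{2^*}$) and destroys both coercivity and compactness. To stabilize the scheme I would follow the truncation idea of d'Avenia--Siciliano~\cite{9}: cut the critical nonlinearity by a smooth even function of $\|\nabla u\|^2$ equal to $1$ on $\{\|\nabla u\|\le R\}$ and vanishing beyond $\{\|\nabla u\|\le 2R\}$, producing a modified functional $J_{b,T}$ that is bounded below and coercive on $S_b$ and coincides with $J_b$ for $\|\nabla u\|\le R$. Here the mass-subcriticality $p<8/3<10/3$ is decisive: under the $L^2$ constraint the $|u|^p$ term is dominated by the gradient at large norm, which together with the nonnegative quartic coupling gives the lower bound. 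I would then prove a local Palais--Smale condition for $J_{b,T}$ strictly below the threshold fixed by the best Hardy--Littlewood--Sobolev--Sobolev constant, using a nonlocal Brezis--Nirenberg / concentration-compactness analysis to exclude loss of mass by bubbling.

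With an even functional that is bounded below and satisfies the local (PS) condition, I would run a Krasnoselskii-genus minimax on $S_b$. For each $j$ I would build a symmetric compact set $A_j\subset S_b$ of genus $j$ on which $\sup J_{b,T}<0$, taking $j$-dimensional subspaces and spreading mass so that the negative $|u|^p$ contribution (of subcritical power $3(p-2)/2<2$ under mass-preserving dilation) dominates the gradient at small scale; setting $c_j=\inf_{\gamma(A)\ge j}\sup_{u\in A}J_{b,T}(u)$ and deforming along $-\nabla_{S_b}J_{b,T}$ then yields at least $k$ negative critical values $c_1\le\cdots\le c_k<0$, each carrying a critical point $u_j$ (with the usual genus multiplicity when levels coincide) and an associated multiplier $\lambda_j$.

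It remains to show these are genuine solutions, and this is where the mass $b$ and the domain size $r$ enter and where I expect the main difficulty. I would estimate the minimax levels and prove there is $b^*>0$ such that for $b\in(0,b^*)$ all the values $c_j(b)$ stay strictly below the critical compactness threshold and are realized by functions with $\|\nabla u_j\|\le R$, so the cutoff is inactive and $J_{b,T}=J_b$ near each $u_j$, giving $k$ solutions of \eqref{eq:1.1}. The hard part is precisely this quantitative step: keeping \emph{all} $k$ levels below the nonlocal-critical threshold while certifying that the corresponding critical points have small gradient norm, which requires sharp energy estimates intertwining the HLS-critical term, the coupling $\int q\phi_u u^2$, and the constraint $\|u\|_2^2=b^2$.
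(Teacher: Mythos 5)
Your proposal follows essentially the same route as the paper: solve the field equation to get $\Phi(u)$ with the nonnegativity identity \eqref{eq:2.6}, truncate the critical Choquard term by a smooth cutoff in $\|\nabla u\|_2$ so that the reduced even functional becomes coercive and bounded below on $S_{r,b}$ (using $p<\frac83$), establish the $(PS)_d$ condition at negative levels via concentration-compactness with the smallness condition on $R_1$ versus $S_{h,l}$, run the Krasnoselskii-genus minimax to produce $k$ negative critical values, and conclude that negative-energy critical points satisfy $\|\nabla u\|_2\le R_1$ so the truncation is inactive and they solve the original system. This matches the paper's Section 3 (including the role of the mass bound $b^*$ and the scaling $u_t=t^{3/2}u(tx)$ that fixes the domain size $r=1/t_0$ to make the minimax levels negative), so the approach is correct and not genuinely different.
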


The remainder of this paper is structured as follows. In Section 2, we establish the variational framework for problem~\eqref{eq:1.1}, reduce the natural energy functional to a single-variable functional depending solely on \(u\), and derive refined upper bounds for the norm of the electrostatic potential. Section 3 is devoted to discussing the multiplicity of normalized solutions and provides the proof of~\zcref[S]{thm:1.1}.

	\section{Preliminaries}

\qquad

Throughout this paper, let \(\Omega \subset \mathbb R^3\) be a fixed bounded smooth domain. For any \(r > 0\), we define the scaled domain
\[
\Omega_{r} = \{ r x \in \mathbb R^3 : x \in \Omega \}.
\]
The precise value of the scaling parameter \(r\) will be related to another parameter \(t_0\) and determined later in Section 3.

We denote by $H^2(\Omega_{r})$ and $H_{0}^{1}(\Omega_{r})$ the usual Sobolev spaces. In particular, the inner product and norm on $H_{0}^{1}(\Omega_{r})$ are defined respectively as
	$$(u,v):=\int_{\Omega_{r}}\nabla u\cdot \nabla v\d  x, \quad \|u\|:=\left(\int_{\Omega_{r}}|\nabla u|^2\d x\right)^{\frac{1}{2}}.$$
For a prescribed mass $b > 0$, we define $L^2$-constrained sphere
	\[
	 S_{r,b} := \{u \in H_0^1(\Omega_r): \|u\|_2^2 = b,\ b > 0\}.
	\]
	For $1\leq p\leq 6$, we use the notation
	\[
		\|u\|_{p}:=\left(\int_{\Omega_{r}}|\nabla u|\d x\right)^{\frac{1}{p}}
	\]
	to denote the norm in $L^{p}(\Omega_{r})$, and $\|u\|_{\infty}$ stands for the norm in $L^{\infty}(\Omega_{r})$. For the sake of brevity, we omit the spatial variable $x\in\Omega_{r}$ in all functions in what follows, unless specified otherwise to emphasize that the function is non-constant.

Within the variational framework, the boundary value problem consisting of system~\eqref{eq:1.1} together with conditions~\eqref{eq:1.2}-\eqref{eq:1.3} corresponds to the functional
\begin{align*}
	\mathcal F(u, \phi) & = \frac12 \int_{\Omega_r} |\nabla u|^2 \d x
	- \frac1{2 (3+\alpha)} \int_{\Omega_r} (I_a * |u|^{3+\alpha})|u|^{3 + \alpha}\d x
	+ \frac12 \int_{\Omega_r} q(x) \phi u^2 \d x \\
	&\quad - \dfrac{1}{p}\int_{\Omega_r} |u|^{p} \d x
	- \frac14 \int_{\Omega_r} |\Delta\phi|^2 \d x - \frac14 \int_{\Omega_r} |\nabla\phi|^2 \d x
\end{align*}
defined on the product space $H_0^1(\Omega_r) \times H^2(\Omega_r)$.
This functional, however, fails to be bounded either from above or from below;
consequently, standard critical-point techniques cannot be applied directly.
Following the reduction strategy employed in~\cite{benci1998eigenvalue,20}, we first resolve the second equation in~\eqref{eq:1.1} to express the potential uniquely as $\phi = \Phi(u)$. Substituting this relation yields a reduced functional that depends solely on the variable $u$.
	
Recall that under the homogenous Navier conditions $\phi = \Delta\phi = 0$ on $\partial\Omega_r$, if $\phi=\Delta\phi=0$ holds on $\Omega_{r}$, the following elliptic estimate hold: there exists a constant $C=C(\Omega_{r})>0$ such that
	$$\int_{\Omega_{r}}|D^2\phi|^2\d  x\leq C\int_{\Omega_{r}}|\nabla\phi|^2\d x, \quad \forall \phi\in H_{0}^{1}(\Omega_{r})\cap H^2(\Omega_{r}),$$
	we denote
	$$\mathbb{H}:=H_{0}^{1}(\Omega_{r})\cap H^2(\Omega_{r}), \ \ \ \ \ \ \ \|\phi\|_{\mathbb{H}}:=\left(\int_{\Omega_{r}}|\nabla\phi|^2\d  x+\int_{\Omega_{r}}|\Delta\phi|^2\d  x\right)^{\frac{1}{2}}$$
	with inner product
	$$(\phi,\psi)_{\mathbb{H}}:=\int_{\Omega_{r}}\nabla\phi\nabla\psi\d  x+\int_{\Omega_{r}}\Delta\phi\Delta\psi\d  x.$$
	
	As mentioned earlier, the $\lambda$ appears as the Lagrange multiplier associated with the mass constraint $\int_{\Omega_r} u^2\d x = b^2$. The set $S_{r,b}=\left\{u\in H_{0}^{1}(\Omega_{r}):\|u\|_2^2=b^2\right\}$ will be the natural constraint set for our variational problem.
	
	The following section is developed within the framework of the Navier boundary conditions~\eqref{eq:1.2}-\eqref{eq:1.3}. A trip $(u,\phi,\lambda)\in H_{0}^{1}(\Omega_{r})\times\mathbb{H}\times\mathbb R$ is termed a weak solution of system~\eqref{eq:1.1} under the boundary conditions~\eqref{eq:1.2}-\eqref{eq:1.3} if it satisfies the following integral identities:
	\begin{align}
		\begin{split}
			&\int_{\Omega_{r}}\nabla u\cdot\nabla\varphi\d  x+\int_{\Omega_{r}}q(x)\phi u\varphi\d  x-\int_{\Omega_{r}}|u|^{p-2}u\varphi\d  x - \lambda \int_{\Omega_r} u\varphi \d x\\
			-{}&\int_{\Omega_{r}}(I_{\alpha}*|u|^{3+\alpha})|u|^{1+\alpha}u\varphi\d  x=0, \ \ \ \ \forall\varphi\in H_{0}^{1}(\Omega_{r})
		\end{split}
		\label{eq:2.1}
	\end{align}
	and
	\begin{align}
		\int_{\Omega_{r}}\nabla\phi\nabla\varphi\d  x + \int_{\Omega_{r}}\Delta\phi\Delta\varphi\d  x =\int_{\Omega_{r}}q(x)u^2\varphi\d  x, \ \ \ \ \forall\varphi\in\mathbb{H}.
	\end{align}
	
	We consider the natural energy functional $\mathcal{F}:H_{0}^{1}(\Omega_{r})\times\mathbb{H}\to\mathbb R$, derived as
	\begin{align}
		\begin{split}
			\mathcal{F}(u,\phi)={}&\dfrac{1}{2}\int_{\Omega_{r}}|\nabla u|^2\d  x+\dfrac{1}{2}\int_{\Omega_{r}}q(x)\phi u^2\d  x-\dfrac{1}{2(3+\alpha)}\int_{\Omega_{r}}(I_{\alpha}*|u|^{3+\alpha})|u|^{3+\alpha}\d  x\\
			&-\dfrac{1}{p}\int_{\Omega_{r}}|u|^{p}\d x-\dfrac{1}{4}\int_{\Omega_{r}}|\Delta\phi|^2\d x-\dfrac{1}{4}\int_{\Omega_{r}}|\nabla\phi|^2\d x.
		\end{split}
		\label{eq:2.3}
	\end{align}
	It can be directly verified that $\mathcal{F}\in C^{1}(H_{0}^{1}(\Omega_{r})\times\mathbb{H},\mathbb R)$, with partial derivatives given by:
	\begin{align}
		\begin{split}
			\braket<\partial_{u}	\mathcal{F}(u,\phi),\varphi>&=\int_{\Omega_{r}}\nabla u\nabla \varphi\d  x+\int_{\Omega}q(x)\phi u\varphi\d  x-\int_{\Omega_{r}}|u|^{p-2}u\varphi\d  x\\
			&\quad -\int_{\Omega_{r}}(I_{\alpha}*|u|^{3+\alpha})|u|^{1+\alpha}u\varphi\d  x, \ \ \ \ \forall\varphi\in H_{0}^{1}(\Omega_{r}),
		\end{split}\\
		\begin{split}
			\braket<\partial_{\phi}	\mathcal{F}(u,\phi), \xi>&=\dfrac{1}{2}\int_{\Omega_{r}}q(x)u^2\xi\d  x-\dfrac{1}{2}\int_{\Omega_{r}}\Delta\phi\Delta\xi\d  x-\dfrac{1}{2}\int_{\Omega_{r}}\nabla\phi\nabla\xi\d  x, \ \ \ \ \forall\xi\in \mathbb{H}.
		\end{split}
	\end{align}
	\begin{lemma}
		Let $(u,\phi,\lambda)\in H_{0}^{1}(\Omega_{r})\times\mathbb{H}\times\mathbb R$. Then $(u,\phi,\lambda)$ is a weak solution of~\eqref{eq:1.1} under
		the boundary condition \eqref{eq:1.2}-\eqref{eq:1.3} if and only if $(u, \phi)$ is a critical
		point of the functional $\mathcal F$ defined in~\eqref{eq:2.3}, subject to the
		constraint $S_{r,b} \times \mathbb H$. In this setting, $\lambda$
		represents the Lagrange multiplier associated with the constraint.
	\end{lemma}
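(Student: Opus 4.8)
The plan is to treat this as the standard equivalence between weak solutions of the coupled system and constrained critical points of the natural energy functional, obtained through the Lagrange multiplier theorem on the Hilbert manifold $S_{r,b} \times \mathbb{H}$. The key structural observation is that the mass constraint acts only on the $u$-component: setting $G(u) := \int_{\Omega_r} u^2\,\d x - b^2$, the set $S_{r,b}$ is the zero level set of $G$, whereas $\phi$ ranges freely over all of $\mathbb{H}$. Consequently, the notion of ``critical point of $\mathcal F$ on $S_{r,b}\times\mathbb H$'' decouples into an \emph{unconstrained} stationarity condition in the $\phi$-direction and a \emph{constrained} stationarity condition in the $u$-direction.

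First I would verify that $S_{r,b}$ is a smooth codimension-one submanifold of $H_0^1(\Omega_r)$, so that the Lagrange multiplier theorem genuinely applies. Since $\langle G'(u), u\rangle = 2\|u\|_2^2 = 2b^2 > 0$ for every $u \in S_{r,b}$, the differential $G'(u)$ never vanishes on $S_{r,b}$; hence the level set is a $C^1$ manifold with tangent space $T_u S_{r,b} = \{\varphi \in H_0^1(\Omega_r) : \int_{\Omega_r} u\varphi\,\d x = 0\}$. Combined with the fact, already recorded in the excerpt, that $\mathcal F \in C^1\bigl(H_0^1(\Omega_r)\times\mathbb H,\mathbb R\bigr)$ with the explicit formulas for $\partial_u\mathcal F$ and $\partial_\phi\mathcal F$, the classical multiplier rule is directly available.

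I would then establish the two implications by matching the partial derivatives against the weak formulation~\eqref{eq:2.1}--\eqref{eq:2.2}. For ``critical point $\Rightarrow$ weak solution'': because $\phi$ is unconstrained, stationarity forces $\langle\partial_\phi\mathcal F(u,\phi),\xi\rangle = 0$ for all $\xi\in\mathbb H$, which upon inserting the explicit formula is precisely~\eqref{eq:2.2}; and the Lagrange multiplier theorem produces a real number $\tilde\lambda$ with $\partial_u\mathcal F(u,\phi) = \tilde\lambda\,G'(u)$ in $\bigl(H_0^1(\Omega_r)\bigr)^*$, i.e. $\langle\partial_u\mathcal F(u,\phi),\varphi\rangle = 2\tilde\lambda\int_{\Omega_r}u\varphi\,\d x$ for all $\varphi$. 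Setting $\lambda := 2\tilde\lambda$ and expanding $\partial_u\mathcal F$ reproduces~\eqref{eq:2.1}. The converse is read off the same identities: a weak solution satisfies~\eqref{eq:2.2}, that is $\partial_\phi\mathcal F(u,\phi)=0$, while~\eqref{eq:2.1} states $\partial_u\mathcal F(u,\phi) = \tfrac{\lambda}{2}\,G'(u)$, so in particular $\partial_u\mathcal F(u,\phi)$ annihilates $T_u S_{r,b}$; together these two facts say that $(u,\phi)$ is a constrained critical point of $\mathcal F$ with $\lambda$ as the associated multiplier.

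I do not expect a genuine analytic obstacle here, as the statement is a bookkeeping equivalence rather than a substantive estimate. The only points demanding care are: confirming that the constraint manifold is smooth (so that ``critical point on $S_{r,b}\times\mathbb H$'' is well defined and the multiplier exists), which is guaranteed by $b>0$ through $\langle G'(u),u\rangle = 2b^2 \neq 0$; and tracking the normalization factor $2$ arising from differentiating the quadratic constraint, so that the $\lambda$ appearing in~\eqref{eq:2.1} is correctly identified with (twice) the abstract multiplier. The well-posedness of the nonlocal Choquard term, required for the $C^1$ regularity of $\mathcal F$, is presumed from the $C^1$ assertion already made in the excerpt together with the admissible ranges $p\in\bigl(2,\tfrac{8}{3}\bigr)$ and $\alpha>0$.
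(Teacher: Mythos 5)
Your proposal is correct and is precisely the standard Lagrange-multiplier bookkeeping the paper intends: the paper itself omits the proof, deferring to the analogous argument in~\cite{20}, and your decomposition into unconstrained stationarity in the $\phi$-direction (recovering the weak form of the Bopp-Podolsky equation) plus constrained stationarity in the $u$-direction on the smooth level set $S_{r,b}$ (with the factor-of-two normalization $\lambda = 2\tilde\lambda$ from differentiating the quadratic constraint) is exactly that argument. No gaps to report.
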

		Consequently, any critical point of $\mathcal F$ restricted to the set
		\[
			S_{r,b} \times \mathbb H =
			\{
				(u,\phi) \in H_0^1(\Omega_r) \times \mathbb H:\ 
				\|u\|_2^2 = b^2
			\}
		\]
		yields a weak solution of system~\eqref{eq:1.1}
		under~\eqref{eq:1.2}-\eqref{eq:1.3}.
		The real number $\lambda$ appearing in~\eqref{eq:2.1}
		then corresponds to the
		associated Lagrange multiplier. The proof of this result follows a line of
		reasoning entirely analogous to that presented in \cite{20},
		and is therefore omitted here.

		Consider a fixed element $u \in H_0^1(\Omega_r)$. We introduce the linear
		functional
		\[
			L_u: \mathbb H \to \mathbb R,\quad
			L_u(\xi) := \int_{\Omega_r} q(x) u^2\xi \d x.
		\]
		Applying H\"older's inequality along with the Sobolev
		embedding, we deduce for
		any $\xi \in \mathbb H$ that
		\begin{equation}
			|L_u(\xi)| \leq \|q(x)\|_\infty \|u\|_4^2 \|\xi\|_2 \leq C_1\|q(x)\|_\infty
			\|u\|_4^2 \|\xi\|_{\mathbb H} \leq C_2\|\nabla u\|_2^2 \|\xi\|_{\mathbb H},
			\label{eq:2.6*}
		\end{equation}
		where the constant $C_1$, $C_2 > 0$ depend only on  $\Omega_r$.
		This confirms the continuity of $L_u$ on $\mathbb H$.
		According to the Riesz representation theorem, there exists a unique element
		$\Phi(u) \in \mathbb H$ satisfying
		\begin{equation}
			L_u(\xi) = (\Phi(u),\xi)_{\mathbb H}
		= \int_{\Omega_r} \nabla\Phi(u) \cdot \nabla\xi \d x
		+ \int_{\Omega_r} \Delta\Phi(u) \Delta\xi \d x, \quad
		\forall \xi \in \mathbb H.
		\label{eq:2.7*}
		\end{equation}
		
		Equivalently, for each $u \in H_0^1(\Omega_r)$, the Bopp-Podolsky
		equation
		\[
			-\Delta\phi + \Delta^2\phi = q(x)u^2, \quad \text{in $\Omega_r$},\quad
			\phi = \Delta\phi = 0 \quad \text{on $\partial\Omega_r$}
		\]
	\noindent possesses a unique weak solution given
	by $\phi = \Phi(u) \in \mathbb H$.

	Substituting $\xi = \Phi(u)$ in~\eqref{eq:2.7*} yields the relation
	\begin{equation}
		(\Phi(u), \Phi(u))_{\mathbb H} = \int_{\Omega_r} q(x) u^2\Phi(u) \d x,
		\label{eq:2.6}
	\end{equation}
	which will be frequently employed in subsequent discussions.
	Utilizing~\eqref{eq:2.3} and \eqref{eq:2.6}, we can obtain
	\begin{align*}
		\mathcal F(u,\Phi(u)) & = \frac12 \int_{\Omega_r} |\nabla u|^2 \d x
		+ \frac12 \int_{\Omega_r} q(x) u^2 \Phi(u) \d x
		- \frac{1}{2(3 + \alpha)} \int_{\Omega_r} (I_\alpha * |u|^{3+\alpha})
			|u|^{3+\alpha} \d x\\
		& \quad - \dfrac{1}{p}\int_{\Omega_r} |u|^{p} \d x - \frac14\|\Phi(u)\|^2_{\mathbb H}\\
	& = \frac12 \int_{\Omega_r} |\nabla u|^2 \d x
	- \dfrac{1}{p}\int_{\Omega_r} |u|^{p} \d x + \frac14 \int_{\Omega_r} q(x) u^2 \Phi(u) \d x\\
	& \quad- \frac1{2(3 + \alpha)} \int_{\Omega_r} (I_\alpha * |u|^{3+\alpha})
		|u|^{3+\alpha} \d x.
	\end{align*}
	
	Based on this, we define the reduced functional
	\[
		J(u) := \frac12 \int_{\Omega_r} |\nabla u|^2\d x
	- \dfrac{1}{p}\int_{\Omega_r} |u|^{p} \d x + \frac14 \int_{\Omega_r} q(x) u^2 \Phi(x) \d x
	- \frac1{2(3 + \alpha)} \int_{\Omega_r} (I_\alpha * |u|^{3+\alpha})
		|u|^{3+\alpha} \d x.
	\]
	Similar to the discussion in~\cite{201}, assume $b > 0$
	and $u \in S_{r,b}$. Then $u$ is the critical point of $J$ on $S_{r,b}$
	if and only if $(u, \lambda, \Phi(u))$ is the weak solution to~\eqref{eq:1.1} under the
	conditions~\eqref{eq:1.2}-\eqref{eq:1.3}, where $\lambda \in \mathbb R$ is the Lagrange
	multiplier associated with the constraint.

	Then, we introduce some useful inequalities.
	\begin{lemma}[label=lemma:2.2*, note={Gagliardo-Nirenberg inequality, \cite{22}}]
		Let $2 < s < 2^* = \frac{2N}{N - 2}$, then there exists a sharp constant $C_{N,s}$ such that for all $u \in H_0(\Omega_r)$
		\[
			\|u\|_s^s \leq C_{N,s}\|u\|_2^{\frac{2s - N(s - 2)}{2}} \|\nabla u\|_2^{\frac{N(s - 2)}{2}}.
		\]
	\end{lemma}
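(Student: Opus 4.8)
The plan is to obtain the inequality by combining the Sobolev embedding $H_0^1(\Omega_r)\hookrightarrow L^{2^*}(\Omega_r)$ with Hölder interpolation between $L^2$ and $L^{2^*}$, treating the question of sharpness separately. Since every $u\in H_0^1(\Omega_r)$ extends by zero to a function $\tilde u\in H^1(\mathbb R^N)$ with identical $L^p$ norms and identical gradient norm, I would first reduce the whole statement to the corresponding inequality on $\mathbb R^N$; the constant $C_{N,s}$ is then universal, depending only on $N$ and $s$, and the bounded-domain version follows at once with the same constant.

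First I would fix the interpolation exponent. Because $2<s<2^*$, there is a unique $\theta\in(0,1)$ satisfying $\frac1s=\frac{1-\theta}{2}+\frac{\theta}{2^*}$, and solving gives $\theta=\frac{N(s-2)}{2s}$. Hölder's inequality then yields $\|u\|_s\le\|u\|_2^{1-\theta}\|u\|_{2^*}^{\theta}$. Applying the Sobolev inequality $\|u\|_{2^*}\le S\|\nabla u\|_2$ to the second factor and raising everything to the power $s$ produces $\|u\|_s^s\le S^{s\theta}\|u\|_2^{s(1-\theta)}\|\nabla u\|_2^{s\theta}$. A direct computation gives $s\theta=\frac{N(s-2)}{2}$ and $s(1-\theta)=\frac{2s-N(s-2)}{2}$, which is precisely the claimed form with $C_{N,s}=S^{N(s-2)/2}$. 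As a consistency check, the rescaling $u\mapsto u(\lambda\,\cdot)$ multiplies both sides by $\lambda^{-N}$, confirming that these exponents are the only ones compatible with the estimate.

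The genuinely delicate point, and the place where pure interpolation is not enough, is the word \emph{sharp}. Existence of a best constant is immediate, since it is the infimum of the scale-invariant quotient, shown finite and positive by the bound above. Identifying its exact value and deciding whether it is attained is the hard part: the standard route, carried out in \cite{22}, is to pass to symmetric decreasing rearrangements to reduce to radial profiles, and then to invoke concentration-compactness, or a direct ODE analysis of the Euler-Lagrange equation $-\Delta w+w=w^{s-1}$, to produce an extremal on $\mathbb R^N$ and read $C_{N,s}$ off its ground-state energy, in the spirit of Weinstein's characterization. On the bounded domain $\Omega_r$ this extremal has full support, so equality is never attained by functions in $H_0^1(\Omega_r)$; nonetheless the estimate holds with the universal constant $C_{N,s}$, and since Section~3 uses only the validity of the bound with a finite constant, the interpolation derivation already supplies everything required.
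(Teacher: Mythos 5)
Your derivation is correct, but note that the paper itself offers no proof of this lemma at all: it is quoted as a known result from the cited reference, so your self-contained argument is genuinely different in that it replaces a citation with the standard elementary derivation. Your route — extend by zero to $\mathbb R^N$, interpolate $\|u\|_s\le\|u\|_2^{1-\theta}\|u\|_{2^*}^{\theta}$ with $\theta=\frac{N(s-2)}{2s}$, apply the Sobolev inequality to the $L^{2^*}$ factor, and raise to the power $s$ — is exactly right, and your exponent bookkeeping ($s\theta=\frac{N(s-2)}{2}$, $s(1-\theta)=\frac{2s-N(s-2)}{2}$) matches the statement; the scaling check also confirms this. The one point to keep straight is that this interpolation produces the inequality with the constant $S^{N(s-2)/2}$, which is \emph{not} the sharp constant; what it does establish is that the scale-invariant quotient is bounded, hence the best constant $C_{N,s}$ (its supremum) is finite and the inequality with $C_{N,s}$ holds by definition, which is all the lemma's wording requires and all that Section~3 of the paper uses. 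You handle this correctly by deferring the identification of the sharp value (Weinstein-type analysis of the ground state of $-\Delta w+w=w^{s-1}$) to the literature. One small refinement: your assertion that the bounded-domain constant coincides with the universal one needs, for the direction beyond zero-extension, the observation that the quotient is invariant under translations and dilations, so any compactly supported test function can be scaled into $\Omega_r$ without changing the quotient; zero extension alone gives only that the domain constant is no larger, which, again, is the only direction the paper needs. In short: your proof is valid, more informative than the paper's bare citation, and correctly isolates sharpness as the only part that genuinely requires the reference.
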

		Then there exists a sharp constant $C(N, \lambda, r, r)$ independent of $f$,
		$g$, such that (Hardy Littlewood-Sobolev inequality,~\cite{23})
		\begin{equation}
			\ab|\int_{\mathbb R^N} \int_{\mathbb R^N}
			\frac{f(x)g(y)}{|x - y|^\lambda} \d x \d y| \leq C(N, \lambda, r, s)
			\|f\|_{L^r(\mathbb R^N)} \|g\|_{L^s(\mathbb R^N)},
			\label{eq:2.9*}
		\end{equation}
		Let $r$, $s > 1$, $0 < \lambda < N$ with $\frac1r + \frac1s + \frac\lambda N = 2$, $f \in L^r(\mathbb R^N)$, $g \in L^s(\mathbb R^N)$.
		
		From Hardy-Littlewood-Sobolev inequality, we can define the best constant
		\[
			S_{h,l} := \inf_{u\in D^{1,2}(\mathbb R^N)\backslash\{0\}}
			\frac{\int_{\mathbb R^N}|\nabla u|^2\d x}
				{[\int_{\mathbb R^N}(I_\alpha * |u|^{3+\alpha})|u|^{3+\alpha} \d x]^{\frac{1}{3+\alpha}}}.
		\]

		Next, we review the definitions related to the genus and state theminimax
		theorem that will be used in the subsequent sections.

		Let $X$ be a Banach space and $A \subseteq X$; the set $A$ is called
		symmetric if $u \in A$ entails $-u \in A$. Denote: $\Sigma:= \{A \subset X\backslash\{0\}: \text{ A is closed and symmetric with respect to the origin}\}$.

		For $A \in \Sigma$, set
		\[
			\gamma(A) :=
			\begin{cases}
				0, & \text{if $A = \emptyset$},\\
				\inf\{k \in \mathbb N: \exists \text{ and an odd $\varphi \in C(A, \mathbb R^k\backslash\{0\})$}\},\\
				+\infty, & \text{If there is no such odd map exists}.
			\end{cases}
		\]
		and $\Sigma_k := \{A \in \Sigma: \gamma(A) \geq k\}$.

		Let us now define the following manifold
		\[
			S_{r,b} := \{u \in H_0^1(\Omega_r) | (u,u)_{L^2} = b^2\},
		\]
		which is endowed with the topology inherited from $H_0^1(\Omega_r)$.
		The tangent space of $S_{r,b}$ at a point $u \in S_{r,b}$ is defined by
		\[
			T_uS_{r,b} = \{v \in H_0^1(\Omega_r): (u,v)_{L^2} = 0\}.
		\]
		Let $J \in C^1(H_0^1(\Omega_r), \mathbb R)$, then $J\big|_{S_{r,b}}$
		belongs to $C^1$. The norm of derivative of $J\big|_{S_{r,b}}$ at any point
		$u \in S_{r,b}$ is given by
		\[
			\|J\big|_{S_{r,b}}(u)\| = \sup_{\|v\| \leq 1,\ v \in T_u S_{r,b}} |\braket<J'(u), v>|.
		\]
		We note that $S_{r,b}$ is symmetric with respect to $0 \in S_{r,b}$ and
		$0 \notin S_{r,b}$. Let $\Sigma(S_{r,b})$ be the family of closed symmetric subsets of $S_{r,b}$, for each $j \in S_{r,b}$, let $\Gamma_j = \{A\in \Sigma(S_{r,b})|\gamma(A) \geq j\}$.
	\begin{proposition}\cite{peng2024normalized}
		Let $J \in C^1 (H_0^1(\Omega_r), \mathbb R)$ be an even functional.
		Assume that $\Gamma_j \neq \emptyset$ for each $j \in \mathbb N$,
		$J\big|_{S_{r,b}}$ is bounded from below and satisfies the $(PS)_d$
		condition for all $d < 0$. Define
		\[
			d_j: = \inf_{A\in \Gamma_j} \sup_{u\in A} J(u), \quad
			j = 1,2,\ldots,n.
		\]
		Then, the following statements hold:
		\begin{enumerate}[label = (\roman*)]
			\item $-\infty < d_1 \leq d_2 \leq \cdots \leq d_n$ and $d_j$ ($j = 1,2,\ldots,n$) is a critical value of $J\big|_{S_{r,b}}$ if $d_j < 0$.
			\item If $d: = d_j = d_{j+1} = \cdots = d_{j+l-1} < 0$ for some $j$,
			$l \geq 1$, then $\gamma(K_d) \geq l$, where $K_d$ denotes the set of
			critical points of $J\big|_{S_{r,b}}$ at the level $d$.
			In particular, $J\big|_{S_{r,b}}$ admits at least $n$ critical points
			at the level $d$.
		\end{enumerate}
	\end{proposition}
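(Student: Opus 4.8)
The plan is to prove this as a standard equivariant minimax (Lusternik--Schnirelmann type) result built on the Krasnoselskii genus, the two engines being a quantitative deformation lemma on the constraint manifold $S_{r,b}$ and the subadditivity of $\gamma$. Throughout I will write $J^c := \{u \in S_{r,b} : J(u) \leq c\}$ for sublevel sets and use the following classical genus properties: (a) $A \subseteq B$ implies $\gamma(A) \leq \gamma(B)$; (b) if $\eta : S_{r,b} \to S_{r,b}$ is odd and continuous then $\gamma(\overline{\eta(A)}) \geq \gamma(A)$; (c) subadditivity $\gamma(\overline{A \cup B}) \leq \gamma(A) + \gamma(B)$; and (d) if $A$ is compact and symmetric with $0 \notin A$, then $\gamma(A) < \infty$ and $A$ admits a symmetric closed neighborhood $N$ in $S_{r,b}$ with $\gamma(N) = \gamma(A)$.

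First I would dispatch the monotonicity and finiteness in (i). Since $\gamma(A) \geq j+1$ forces $\gamma(A) \geq j$, the families are nested, $\Gamma_{j+1} \subseteq \Gamma_j$, so taking the infimum over a smaller class can only increase it; hence $d_j \leq d_{j+1}$. For the lower bound, every $A \in \Gamma_1$ is nonempty, so $\sup_{u \in A} J(u) \geq \inf_{S_{r,b}} J$, and since $J\big|_{S_{r,b}}$ is bounded from below we obtain $d_1 \geq \inf_{S_{r,b}} J > -\infty$.

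The heart of both parts, and what I expect to be the main obstacle, is the deformation step. Using the $(PS)_d$ condition at a level $d < 0$ together with a pseudo-gradient flow on $S_{r,b}$ constructed to be \emph{odd} (possible because $J$ is even, so its constrained gradient is odd and the pseudo-gradient field can be symmetrized), one produces for each noncritical such $d$ an $\varepsilon > 0$ and an odd homeomorphism $\eta$ of $S_{r,b}$ with $\eta(J^{d+\varepsilon}) \subseteq J^{d-\varepsilon}$; when $d$ is critical but $K_d$ is compact (which $(PS)_d$ guarantees), the flow can instead be made to push $J^{d+\varepsilon}\setminus N$ into $J^{d-\varepsilon}$ for a symmetric neighborhood $N$ of $K_d$. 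The delicate points are precisely the odd, equivariant construction of the pseudo-gradient on the constrained manifold and the uniform flow-time estimate, which requires bounding the constrained derivative $\|J\big|_{S_{r,b}}'(u)\|$ away from zero on the annular region $J^{-1}[d-\varepsilon,\,d+\varepsilon]$ (outside $N$, in the critical case); this separation is exactly what $(PS)_d$ delivers.

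With the deformation available, part (i) follows by contradiction: if $d_j < 0$ were not critical, pick $A \in \Gamma_j$ with $A \subseteq J^{d_j+\varepsilon}$; then $\overline{\eta(A)} \subseteq J^{d_j-\varepsilon}$, while (b) gives $\gamma(\overline{\eta(A)}) \geq j$, so $\overline{\eta(A)} \in \Gamma_j$ yet $\sup J$ on it is $\leq d_j - \varepsilon < d_j$, contradicting the definition of $d_j$. For part (ii), assume $d = d_j = \cdots = d_{j+l-1} < 0$ and suppose $\gamma(K_d) \leq l-1$; via (d) choose a symmetric neighborhood $N$ with $\gamma(N) \leq l-1$, and take $A \in \Gamma_{j+l-1}$ with $A \subseteq J^{d+\varepsilon}$. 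Subadditivity (c) yields $\gamma(\overline{A \setminus N}) \geq \gamma(A) - \gamma(N) \geq j$, so $\overline{A \setminus N} \in \Gamma_j$; but the deformation maps it into $J^{d-\varepsilon}$ with genus preserved by (b), again producing a set in $\Gamma_j$ on which $\sup J \leq d - \varepsilon < d = d_j$, a contradiction. Hence $\gamma(K_d) \geq l$, and summing the multiplicities across coincident levels gives at least $n$ critical points of $J\big|_{S_{r,b}}$.
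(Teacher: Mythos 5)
The paper offers no proof of this proposition at all: it is imported verbatim from the reference \cite{peng2024normalized}, so there is nothing internal to compare your argument against. What you have written is the classical genus-theoretic minimax proof (a constrained Clark-type theorem): monotonicity of the levels from $\Gamma_{j+1}\subseteq\Gamma_j$, boundedness below for $d_1>-\infty$, and then an odd deformation on the sphere $S_{r,b}$ combined with the monotonicity, supervariance under odd maps, subadditivity, and neighborhood-continuity properties of the Krasnoselskii genus to run the two contradiction arguments. This is correct in structure and is, in substance, the proof that the cited source (and its own antecedents, e.g., Rabinowitz and Szulkin) gives; your identification of the equivariant deformation lemma as the load-bearing step, and of $(PS)_d$ for $d<0$ as what makes $K_d$ compact and the gradient bounded away from zero on the strip $J^{-1}[d-\varepsilon,d+\varepsilon]\setminus N$, is exactly right. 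Two points deserve slightly more care if this were to be written out in full. First, the deformation lemma itself is asserted rather than proven; on the Hilbert submanifold $S_{r,b}$ one must build a pseudo-gradient field tangent to the constraint and symmetrize it, and one must also choose $\varepsilon$ small enough that $[d-\varepsilon,d+\varepsilon]\subset(-\infty,0)$ so the hypothesis $(PS)_d$ for negative levels actually applies -- this is standard (Willem's book covers it) but it is the only nontrivial analytic content. Second, the final claim about ``at least $n$ critical points'' needs the standard remark that a finite symmetric set not containing the origin has genus $1$, so $\gamma(K_d)\geq l\geq 2$ forces $K_d$ to be infinite; your phrase ``summing the multiplicities'' implicitly uses this but does not say it.
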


		Now, we introduce some properties of $\Phi(u)$, which can be found in~\cite{20}.
	\begin{lemma}[label=lemma:2.3]\leavevmode
		\begin{enumerate}[label = (\roman*)]
			\item If $u_n \rightharpoonup u$ in $H_0^1(\Omega_r)$, then
			\[
				\int_{\Omega_r} q(x) u_n^2 \Phi(u_n) \d x \to \int_{\Omega_r} q(x)
				u^2\Phi(u) \d x.
			\]
			Moreover, the map $\Phi$ is compact.
			\item If $u_n \rightharpoonup u$ in $H_0^1(\Omega_r)$, then
			\[
				\int_{\Omega_r}q(x)\Phi(u_n)u_n\varphi\d x
				\to \int_{\Omega_r} q(x) \Phi(u)u\varphi \d x.
			\]
		\end{enumerate}
	\end{lemma}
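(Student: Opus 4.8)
The plan is to exploit the defining relation \eqref{eq:2.7*} together with the a priori bound $\|\Phi(u)\|_{\mathbb H}\le C_2\|\nabla u\|_2^2$, obtained by setting $\xi=\Phi(u)$ in \eqref{eq:2.6*}, and to combine it with the compact Sobolev embeddings available on the bounded domain $\Omega_r$. I begin with part (i). Since $u_n\rightharpoonup u$ in $H_0^1(\Omega_r)$, the sequence $\{u_n\}$ is bounded, hence so is $\{\Phi(u_n)\}$ in $\mathbb H$, and after passing to a subsequence I may assume $\Phi(u_n)\rightharpoonup\Psi$ in $\mathbb H$. To identify $\Psi$, I fix $\xi\in\mathbb H$ and pass to the limit in $(\Phi(u_n),\xi)_{\mathbb H}=\int_{\Omega_r}q(x)u_n^2\xi\d x$: the left side converges to $(\Psi,\xi)_{\mathbb H}$ by weak convergence, while the right side converges to $\int_{\Omega_r}q(x)u^2\xi\d x$ because the compact embedding $H_0^1(\Omega_r)\hookrightarrow L^4(\Omega_r)$ forces $u_n^2\to u^2$ in $L^2(\Omega_r)$ and $q(x)\xi\in L^2(\Omega_r)$. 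Uniqueness in \eqref{eq:2.7*} then gives $\Psi=\Phi(u)$, and since the limit is independent of the subsequence the whole sequence satisfies $\Phi(u_n)\rightharpoonup\Phi(u)$.

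The decisive step is to upgrade this to strong convergence. I expand
\[
\|\Phi(u_n)-\Phi(u)\|_{\mathbb H}^2=(\Phi(u_n),\Phi(u_n))_{\mathbb H}-(\Phi(u_n),\Phi(u))_{\mathbb H}-(\Phi(u),\Phi(u_n))_{\mathbb H}+(\Phi(u),\Phi(u))_{\mathbb H},
\]
and rewrite each inner product through \eqref{eq:2.7*}, e.g.\ $(\Phi(u_n),\Phi(u))_{\mathbb H}=\int_{\Omega_r}q(x)u_n^2\Phi(u)\d x$. Splitting $\int q u_n^2\Phi(u_n)-\int q u^2\Phi(u)$ as $\int q(u_n^2-u^2)\Phi(u_n)+\int q u^2(\Phi(u_n)-\Phi(u))$ and estimating the two pieces by H\"older's inequality—using $u_n^2\to u^2$ in $L^2$ with $\{\Phi(u_n)\}$ bounded in $L^2$, and the compact embedding $\mathbb H\hookrightarrow L^2(\Omega_r)$ for the second—shows $\int q u_n^2\Phi(u_n)\to\int q u^2\Phi(u)$, which is precisely the first assertion of (i). The same ingredients make the remaining three inner products converge to $\int q u^2\Phi(u)$, so the four terms cancel and $\|\Phi(u_n)-\Phi(u)\|_{\mathbb H}\to 0$; this establishes the compactness of $\Phi$.

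For part (ii) I use the strong convergence $\Phi(u_n)\to\Phi(u)$ in $\mathbb H$ just obtained, which in particular yields $\Phi(u_n)\to\Phi(u)$ in $L^6(\Omega_r)$, together with $u_n\to u$ in $L^q(\Omega_r)$ for every $q<6$. Writing
\[
\int_{\Omega_r}q(x)\Phi(u_n)u_n\varphi\d x-\int_{\Omega_r}q(x)\Phi(u)u\varphi\d x=\int_{\Omega_r}q(x)(\Phi(u_n)-\Phi(u))u_n\varphi\d x+\int_{\Omega_r}q(x)\Phi(u)(u_n-u)\varphi\d x,
\]
I bound the first integral by $\|q\|_\infty\|\Phi(u_n)-\Phi(u)\|_6\|u_n\|_3\|\varphi\|_2\to 0$ and the second by $\|q\|_\infty\|\Phi(u)\varphi\|_3\|u_n-u\|_{3/2}\to 0$, which gives the claim.

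I expect the main obstacle to be the passage from weak to strong convergence of $\Phi(u_n)$ in the second paragraph: the argument hinges on expressing the $\mathbb H$-inner products via the linear problem \eqref{eq:2.7*} so that every term takes the form $\int q\cdot(\text{quadratic in }u)\cdot\Phi$, after which the compact embeddings on the bounded domain do the work. All other estimates are routine applications of H\"older's inequality and the Rellich--Kondrachov theorem.
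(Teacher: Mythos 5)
Your proof is correct. Note that the paper does not actually prove this lemma: it states the properties and refers the reader to \cite{20}, so there is no in-paper argument to compare against. Your argument is the standard and complete one for such reduction maps: the a priori bound from \eqref{eq:2.6*} gives boundedness of $\{\Phi(u_n)\}$ in $\mathbb H$; the weak limit is identified through the defining identity \eqref{eq:2.7*} together with $u_n^2\to u^2$ in $L^2(\Omega_r)$ (Rellich--Kondrachov via $H_0^1\hookrightarrow L^4$); and the upgrade to strong $\mathbb H$-convergence follows by expanding $\|\Phi(u_n)-\Phi(u)\|_{\mathbb H}^2$ and rewriting each inner product as $\int_{\Omega_r}q(x)\,(\text{square})\,\Phi\d x$, after which every term converges to $\int_{\Omega_r}q(x)u^2\Phi(u)\d x$. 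The H\"older exponents in part (ii) ($\tfrac16+\tfrac13+\tfrac12=1$ and $\tfrac13+\tfrac23=1$) check out, and compactness of $\Phi$ is exactly the statement that weakly convergent sequences are sent to strongly convergent ones, which your second paragraph establishes.
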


	At the end of this section, we revisit the concentration-compactness principle which will be used to overcome the lack of compactness of the functional.
	\begin{lemma}[label=lemma:2.4, note=\cite{24}]
		Let $\{u_n\}$ be a bounded sequence in $\mathcal D^{1,2}(\mathbb R^3)$
		converging weakly and almost everywhere to some $u \in \mathcal D^{1,2}(\mathbb R^3)$. Let
		\[
			|\nabla u_n|^2 \rightharpoonup \mu,
		\]
		\[
			(I_\alpha * |u_n|^{3+\alpha}) |u_n|^{3+\alpha} \rightharpoonup \nu
		\]
		weakly in the measure space, where $\mu$, $\nu$ are bounded positive measures on $\mathbb R^3$.
		Then there exists a set $I$ (at most countable) and
		$\{\mu_j\}_{j\in I}$, $\{\nu_j\}_{j\in I}\subset [0, \infty)$ such that
		\[
			\mu \geq |\nabla u|^2 + \sum_{j\in I} \mu_i \delta_{z_j},
		\]
		\[
			\nu = (I_\alpha * |u|^{3+\alpha})|u|^{3+\alpha} + \sum_{j\in I}
			\nu_j \delta_{z_j}, \quad
			\sum_{j\in I}\nu_j^{\frac12} < \infty,
		\]
		and
		\begin{equation}
			\mu_j \geq S_{h,l} \nu_j^{\frac1{3+\alpha}},
			\label{eq:2.10}
		\end{equation}
		where $\delta_{z_j}$ is the Dirac mass of mass 1 concentrated at
		$z_j$ and the subsequence $\{z_j\} \subset \mathbb R^3$.
	\end{lemma}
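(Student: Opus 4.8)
The plan is to adapt Lions' second concentration--compactness principle to the critical nonlocal density $(I_\alpha*|u_n|^{3+\alpha})|u_n|^{3+\alpha}$, using the Hardy--Littlewood--Sobolev inequality~\eqref{eq:2.9*} and the best constant $S_{h,l}$ in place of the Sobolev constant. First I would reduce to the case $u\equiv0$. Setting $v_n:=u_n-u\rightharpoonup0$, the expansion $|\nabla u_n|^2=|\nabla v_n|^2+|\nabla u|^2+2\nabla v_n\cdot\nabla u$ together with $\nabla v_n\rightharpoonup0$ in $L^2$ shows that the cross term tends weakly to $0$ as a measure, so $|\nabla v_n|^2\rightharpoonup\mu-|\nabla u|^2=:\tilde\mu\ge0$. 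For the nonlocal density I would invoke a nonlocal Br\'ezis--Lieb lemma to obtain, at the level of measures,
\[
(I_\alpha*|u_n|^{3+\alpha})|u_n|^{3+\alpha}-(I_\alpha*|v_n|^{3+\alpha})|v_n|^{3+\alpha}\rightharpoonup(I_\alpha*|u|^{3+\alpha})|u|^{3+\alpha},
\]
so that $\nu=(I_\alpha*|u|^{3+\alpha})|u|^{3+\alpha}+\tilde\nu$, where $\tilde\nu$ is the weak-$\ast$ limit of $\tilde\nu_n:=(I_\alpha*|v_n|^{3+\alpha})|v_n|^{3+\alpha}$. It then suffices to describe $\tilde\mu$ and $\tilde\nu$ under $v_n\rightharpoonup0$.

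For such a vanishing sequence and any $\varphi\in C_c^\infty(\mathbb R^3)$, I would apply the very definition of $S_{h,l}$ to the test function $\varphi v_n$, which reads
\[
S_{h,l}\left(\int_{\mathbb R^3}(I_\alpha*|\varphi v_n|^{3+\alpha})|\varphi v_n|^{3+\alpha}\d x\right)^{\frac1{3+\alpha}}\le\int_{\mathbb R^3}|\nabla(\varphi v_n)|^2\d x.
\]
Since $v_n\to0$ in $L^2_{\mathrm{loc}}$, the right-hand side converges to $\int_{\mathbb R^3}\varphi^2\,\mathrm{d}\tilde\mu$. The step I expect to be the main obstacle is the limit of the left-hand side, namely
\[
\int_{\mathbb R^3}(I_\alpha*|\varphi v_n|^{3+\alpha})|\varphi v_n|^{3+\alpha}\d x\longrightarrow\int_{\mathbb R^3}|\varphi|^{2(3+\alpha)}\,\mathrm{d}\tilde\nu .
\]
In contrast with the local case, $\varphi$ cannot simply be pulled through the convolution, and one must estimate the difference between this quantity and $\int_{\mathbb R^3}|\varphi|^{2(3+\alpha)}\,\mathrm{d}\tilde\nu_n$, whose double-integral integrand carries the factor $|\varphi(x)|^{3+\alpha}\bigl(|\varphi(y)|^{3+\alpha}-|\varphi(x)|^{3+\alpha}\bigr)$. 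The key idea is to use the global Lipschitz continuity of $|\varphi|^{3+\alpha}$ to absorb one power of $|x-y|$, thereby softening the kernel from $|x-y|^{-(3-\alpha)}$ to $|x-y|^{-(2-\alpha)}$; applying~\eqref{eq:2.9*} to the softened kernel, together with the fact that $v_n\to0$ strongly in $L^q_{\mathrm{loc}}$ for every $q<2^*=6$ (Rellich, which is available precisely because $3+\alpha<6$), forces this difference to $0$, while weak-$\ast$ convergence handles the term $\int|\varphi|^{2(3+\alpha)}\,\mathrm{d}\tilde\nu_n$.

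Passing to the limit in the displayed inequality then yields the reverse H\"older inequality between measures
\[
\left(\int_{\mathbb R^3}|\varphi|^{2(3+\alpha)}\,\mathrm{d}\tilde\nu\right)^{\frac1{2(3+\alpha)}}\le S_{h,l}^{-\frac12}\left(\int_{\mathbb R^3}\varphi^2\,\mathrm{d}\tilde\mu\right)^{\frac12},\qquad\forall\,\varphi\in C_c^\infty(\mathbb R^3).
\]
Because the exponent $2(3+\alpha)$ on the $\tilde\nu$--side strictly exceeds the exponent $2$ on the $\tilde\mu$--side, the classical atomicity lemma of Lions forces $\tilde\nu$ to be purely atomic, $\tilde\nu=\sum_{j\in I}\nu_j\delta_{z_j}$ with $I$ at most countable; inserting into the reverse inequality a sequence of cut-offs concentrating at a fixed $z_j$ gives $\tilde\mu(\{z_j\})\ge S_{h,l}\,\nu_j^{1/(3+\alpha)}$, whence $\tilde\mu\ge\sum_{j\in I}\mu_j\delta_{z_j}$ with $\mu_j\ge S_{h,l}\,\nu_j^{1/(3+\alpha)}$, which is precisely~\eqref{eq:2.10}. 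Finally, since $\{u_n\}$ is bounded in $\mathcal D^{1,2}(\mathbb R^3)$ the limit measure $\mu$ is finite, so $\sum_j\mu_j\le\mu(\mathbb R^3)<\infty$; combined with the atom bound this gives $\sum_j\nu_j^{1/(3+\alpha)}<\infty$, and since $\tfrac1{3+\alpha}<\tfrac12$ (as $\alpha>0$) and $\nu_j\to0$, the claimed summability $\sum_j\nu_j^{1/2}<\infty$ follows. Recalling $\mu=|\nabla u|^2+\tilde\mu$ and $\nu=(I_\alpha*|u|^{3+\alpha})|u|^{3+\alpha}+\tilde\nu$ then restores the full statement of the lemma.
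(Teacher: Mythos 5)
The paper does not prove this lemma at all: as the citation attached to the statement indicates, it is imported as a known result from \cite{24}, so there is no in-paper argument to compare yours against. Your sketch is a sound reconstruction of the standard proof, and it is essentially the route taken in \cite{24}: reduction to a vanishing sequence $v_n = u_n - u$, identification of the regular parts of $\mu$ and $\nu$, a reverse H\"older inequality between the remainder measures $\tilde\mu$ and $\tilde\nu$ obtained by testing the definition of $S_{h,l}$ on $\varphi v_n$, Lions' atomicity lemma, and the atom estimate yielding \eqref{eq:2.10}. You also correctly isolate the two genuinely nonlocal ingredients: the measure-level Br\'ezis--Lieb splitting for the Choquard density (which you invoke rather than prove --- legitimate, since it is available in the literature, e.g.\ in \cite{24} itself) and the commutator step replacing $\int(I_\alpha*|\varphi v_n|^{3+\alpha})|\varphi v_n|^{3+\alpha}\d x$ by $\int|\varphi|^{2(3+\alpha)}\d\tilde\nu_n$. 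One repair is needed in that step: the softened kernel $|x-y|^{-(2-\alpha)}$ is an admissible Hardy--Littlewood--Sobolev kernel in \eqref{eq:2.9*} only when $0<\alpha<2$, whereas the paper allows $\alpha\in(0,3)$. For $\alpha\ge 2$ you should combine the Lipschitz bound with the trivial bound $\bigl||\varphi(y)|^{3+\alpha}-|\varphi(x)|^{3+\alpha}\bigr|\le 2\|\varphi\|_\infty^{3+\alpha}$ and split the double integral into $|x-y|\le 1$, where the resulting kernel is bounded, and $|x-y|>1$, where $|x-y|^{-(3-\alpha)}\chi_{\{|x-y|>1\}}\in L^{6/(3-\alpha)}(\mathbb R^3)$ pairs with $|v_n|^{3+\alpha}\in L^{6/(3+\alpha)}(\mathbb R^3)$ via Young's inequality; in both regions the factor $\int|\varphi|^{3+\alpha}|v_n|^{3+\alpha}\d x\to 0$ supplied by Rellich closes the estimate, so the strategy survives intact on the full range of $\alpha$.
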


	\begin{lemma}[label=lemma:2.5, note=\cite{24}]
		Let $\{u_n\} \subset \mathcal D^{1,2}(\mathbb R^3)$ be a sequence in~\zcref[S]{lemma:2.4} and define
		\begin{gather*}
			\mu_\infty := \lim_{R\to\infty} \limsup_{n\to\infty}
			\int_{|x|\geq R} |\nabla u_n|^2 \d x,\\
			\nu_\infty := \lim_{R\to\infty} \limsup_{n\to\infty} \int_{|x| \geq R}
			(I_\alpha * |u|^{3+\alpha}) |u|^{3+\alpha} \d x.
		\end{gather*}
		Then it follows that
		\begin{gather}
			\limsup_{n\to\infty} \int_{\mathbb R^3} |\nabla u_n|^2 \d x
		= \int_{\mathbb R^3} \d \mu + \mu_\infty,\\
		\limsup_{n \to \infty} \int_{\mathbb R^3} (I_\alpha * |u|^{3+\alpha})
		|u|^{3+\alpha} \d x = \int_{\mathbb R^3} \d\nu + \nu_\infty
		\end{gather}
		and
		\begin{equation}
			S_{h,l}^2 \nu_\infty^{\frac{2}{3+\alpha}} \leq \mu_\infty
			\ab(\int_{\mathbb R^3} \d \mu + \mu_\infty).
		\end{equation}
	\end{lemma}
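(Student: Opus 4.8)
My plan is to adapt Lions' second concentration-compactness principle (the ``mass at infinity'' companion to \zcref[S]{lemma:2.4}) to the nonlocal Choquard term. Fix once and for all a cutoff $\psi\in C^\infty(\mathbb R^3,[0,1])$ with $\psi\equiv 0$ on $B_1(0)$ and $\psi\equiv 1$ on $\mathbb R^3\setminus B_2(0)$, and set $\psi_R(x):=\psi(x/R)$, so that $\psi_R$ is supported in $\{|x|\ge R\}$, equals $1$ on $\{|x|\ge 2R\}$, and $|\nabla\psi_R|\le C/R$ is supported on the annulus $A_R:=\{R\le|x|\le 2R\}$. Abbreviating $D(w):=\int_{\mathbb R^3}(I_\alpha*|w|^{3+\alpha})|w|^{3+\alpha}\d x$, I would systematically replace the sharp cut-offs $\mathbf 1_{\{|x|\ge R\}}$ hidden in $\mu_\infty,\nu_\infty$ by the smooth weights $\psi_R$; since $A_R$ recedes to infinity and $u\in L^6(\mathbb R^3)$, dilating the truncation radius by a fixed factor does not change any escaping limit, and all discrepancies between the two truncations wash out in the iterated limit $\lim_{R\to\infty}\limsup_{n\to\infty}$.

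First I would prove the two mass identities. For all but countably many radii $R$ one has $\mu(\partial B_R)=\nu(\partial B_R)=0$, so the weak-$*$ convergences $|\nabla u_n|^2\rightharpoonup\mu$ and $(I_\alpha*|u_n|^{3+\alpha})|u_n|^{3+\alpha}\rightharpoonup\nu$ furnished by \zcref[S]{lemma:2.4} give $\int_{|x|<R}|\nabla u_n|^2\d x\to\mu(B_R)$ together with its analogue for $\nu$. Splitting $\int_{\mathbb R^3}=\int_{|x|<R}+\int_{|x|\ge R}$, taking $\limsup_{n\to\infty}$ and then letting $R\to\infty$ (so that $\mu(B_R)\uparrow\int_{\mathbb R^3}\d\mu$) yields $\limsup_{n\to\infty}\int_{\mathbb R^3}|\nabla u_n|^2\d x=\int_{\mathbb R^3}\d\mu+\mu_\infty$, and identically $\limsup_{n\to\infty}D(u_n)=\int_{\mathbb R^3}\d\nu+\nu_\infty$. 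The only care needed is the choice of good radii, which costs nothing.

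The heart of the matter is the inequality, and its difficulty is exactly the nonlocality: $\nu_\infty$ truncates only the outer variable of the convolution, whereas the sharp constant $S_{h,l}$ naturally controls expressions in which \emph{both} variables of the Riesz kernel are truncated. I would proceed in three steps. (i) Since the kernel is positive, the bilinear form $B(f,g):=\int\int I_\alpha(x-y)f(x)g(y)\,\d x\,\d y$ is positive-definite, so Cauchy--Schwarz with $f=\mathbf 1_{\{|x|\ge R\}}|u_n|^{3+\alpha}$ and $g=|u_n|^{3+\alpha}$ gives $\int_{|x|\ge R}(I_\alpha*|u_n|^{3+\alpha})|u_n|^{3+\alpha}\d x\le D(\mathbf 1_{\{|x|\ge R\}}u_n)^{1/2}\,D(u_n)^{1/2}$; writing $\nu_\infty^{(2)}:=\lim_{R\to\infty}\limsup_{n\to\infty}D(\psi_R u_n)$ for the doubly-truncated escaping energy (which coincides with the limit of $D(\mathbf 1_{\{|x|\ge R\}}u_n)$), passing to the iterated limit yields $\nu_\infty\le(\nu_\infty^{(2)})^{1/2}\bigl(\int_{\mathbb R^3}\d\nu+\nu_\infty\bigr)^{1/2}$. (ii) Applying the definition of $S_{h,l}$ to the admissible function $\psi_R u_n$, namely $S_{h,l}\,D(\psi_R u_n)^{1/(3+\alpha)}\le\|\nabla(\psi_R u_n)\|_2^2$, and expanding $\nabla(\psi_R u_n)=\psi_R\nabla u_n+u_n\nabla\psi_R$ with the cross and remainder terms controlled by a multiple of $(\int_{A_R}|u|^6)^{1/3}\to 0$, I obtain after $n\to\infty$ and $R\to\infty$ the bound $(\nu_\infty^{(2)})^{1/(3+\alpha)}\le S_{h,l}^{-1}\mu_\infty$. (iii) Applying the same sharp inequality to $u_n$ itself and invoking the mass identities of the previous paragraph gives $\bigl(\int_{\mathbb R^3}\d\nu+\nu_\infty\bigr)^{1/(3+\alpha)}\le S_{h,l}^{-1}\bigl(\int_{\mathbb R^3}\d\mu+\mu_\infty\bigr)$. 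Finally, raising the estimate from (i) to the power $2/(3+\alpha)$ and substituting (ii) and (iii) produces exactly
\[
S_{h,l}^2\,\nu_\infty^{\frac{2}{3+\alpha}}\le\mu_\infty\Bigl(\int_{\mathbb R^3}\d\mu+\mu_\infty\Bigr).
\]
I expect Step (i) — reconciling the single truncation built into $\nu_\infty$ with the double truncation demanded by the Sobolev-type constant, while tracking the full energy that the Riesz potential couples in from the whole space — to be the genuinely delicate point; the cross-term estimates in (ii) are routine once $u\in L^6(\mathbb R^3)$ is used, and the measure-theoretic identities are standard.
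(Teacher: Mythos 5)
The paper never proves this lemma: it is quoted directly from the cited reference \cite{24} (with a typo, $u$ in place of $u_n$, which you have silently and correctly fixed), so there is no internal proof to compare against; the relevant benchmark is the proof in that reference, and your argument is essentially it. Your three-step scheme is sound and does yield the stated inequality: the mass identities follow from Portmanteau along radii with $\mu(\partial B_R)=\nu(\partial B_R)=0$; Cauchy--Schwarz for the positive-definite Riesz bilinear form is precisely what reconciles the single truncation in $\nu_\infty$ with the doubly truncated quantity, and it is the source of the asymmetric factor $\int_{\mathbb R^3}\d\mu+\mu_\infty$ next to $\mu_\infty$; and the sharp constant applied to $\psi_R u_n$ and to $u_n$ closes the loop after raising to the power $2/(3+\alpha)$. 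The one step you compress deserves a word of caution: in (ii) you claim the cross and remainder terms are ``controlled by a multiple of $\bigl(\int_{A_R}|u|^6\bigr)^{1/3}$,'' but weak convergence in $L^6$ alone does not allow replacing $u_n$ by the limit $u$ on the annulus (lower semicontinuity of the norm goes the wrong way). You must first use Rellich--Kondrachov to get $u_n\to u$ strongly in $L^2(A_R)$, estimate $\int_{A_R}u_n^2|\nabla\psi_R|^2\le C R^{-2}\int_{A_R}u_n^2\to C R^{-2}\int_{A_R}u^2$ as $n\to\infty$, and only then apply H\"older and $u\in L^6(\mathbb R^3)$ to send $R\to\infty$. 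This is routine, as you say, but it is where local compactness genuinely enters, so it should be stated rather than absorbed into ``once $u\in L^6$ is used.''
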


	\section{Proof of Theorem 1.1}
	\qquad For $u\in S_{r,b}$, by~\zcref[S]{lemma:2.2*} and the definition of $S_{h,l}$, we get
	\begin{align*}
		\begin{split}
			J(u)&=\dfrac{1}{2}\int_{\Omega_{r}}|\nabla u|^2\d x-\dfrac{1}{p}\int_{\Omega_{r}}|u|^{p} \d x+\dfrac{1}{4}\int_{\Omega_{r}}q(x) u^2\Phi(u)\d x-\dfrac{1}{2(3+\alpha)}\int_{\Omega_{r}}(I_{\alpha}*|u|^{3+\alpha})|u|^{3+\alpha}\d x\\
			&\geq \dfrac{1}{2}\int_{\Omega_{r}}|\nabla u|^2\d x-\dfrac{1}{p}\int_{\Omega_{r}}|u|^{p} \d x-\dfrac{1}{2(3+\alpha)}S_{h,l}^{-(3+\alpha)}\|\nabla u\|_{2}^{2(3+\alpha)}\\
			&\geq \dfrac{1}{2}\int_{\Omega_{r}}|\nabla u|^2\d x-\dfrac{1}{p}C_{N,p}\|u\|_{2}^{p-\frac{3(p-2)}{2}}\|\nabla u\|_{2}^{\frac{3(p-2)}{2}}-\dfrac{1}{2(3+\alpha)}S_{h,l}^{-(3+\alpha)}\|\nabla u\|_{2}^{2(3+\alpha)}\\
			&=\dfrac{1}{2}\|\nabla u\|_{2}^2-\dfrac{1}{p}C_{N,p}b^{p-\frac{3(p-2)}{2}}\|\nabla u\|_{2}^{\frac{3(p-2)}{2}}-\dfrac{1}{2(3+\alpha)}S_{h,l}^{-(3+\alpha)}\|\nabla u\|_{2}^{2(3+\alpha)}\\
			&:=h(\|\nabla u\|_{2}),
		\end{split}
	\end{align*}
	where
	\begin{align*}
		h(t):=\dfrac{1}{2}t^2-\dfrac{1}{p}C_{N,p}b^{p-\frac{3(p-2)}{2}}t^{\frac{3(p-2)}{2}}-\dfrac{1}{2(3+\alpha)}S_{h,l}^{-(3+\alpha)}t^{2(3+\alpha)}.
	\end{align*}
	
	By $0<\alpha<3, 2<p<\dfrac{8}{3}$, there exist $b^* > 0$ such that
	for all $b \in (0, b^*)$, $h(t)$ achieves its positive local maximum, i.e.,
	there exists $0 < R_1 < R_2 < +\infty$
	such that $h(t)>0$ for $t\in(R_{1},R_{2})$, and $h(t)<0$ for $t\in(0,R_{1})$ or $t\in(R_{2},+\infty)$. Let $\xi \in C^{\infty}(\mathbb R^{+},[0,1])$ satisfy
	\begin{align*}
		\xi(t)=\left\{\begin{array}{cl}
			1,&\text{if}\ \  t\leq R_{1},\\
			0,&\text{if}\ \  t\geq R_{2}.
		\end{array}\right.
	\end{align*}
	
	Now, we introduce the truncated functional
	\begin{align*}
		J^{T}(u):= {} & \dfrac{1}{2}\int_{\Omega_{r}}|\nabla u|^2\d x-\dfrac{1}{p}\int_{\Omega_{r}}|u|^{p}\d x+\dfrac{1}{4}\int_{\Omega_{r}}q(x) u^2\Phi(u)\d x\\
		&-\dfrac{\xi(\|\nabla u\|_{2})}{2(3+\alpha)}\int_{\Omega_{r}}(I_{\alpha}*|u|^{3+\alpha})|u|^{3+\alpha}\d x,
	\end{align*}
	where
	\begin{align*}
		\widetilde{h}(t):=\dfrac{1}{2}t^2-\dfrac{1}{p}C_{N,p}b^{p-\frac{3(p-2)}{2}}t^{\frac{3(p-2)}{2}}-\dfrac{\xi(t)}{2(3+\alpha)}S_{h,l}^{-(3+\alpha)}t^{2(3+\alpha)}.
	\end{align*}
	Then, by the definition of $\xi$ and since $b \in (0, b^*)$, we have $\widetilde{h}(t)<0$ for all $t\in(0,R_{1})$ and $\widetilde{h}(t)>0$ for all $t\in(R_{1},+\infty)$. Moreover, we will choose $R_{1}>0$ small enough such that
	\begin{equation}
		\dfrac{1}{2}r^2-\dfrac{1}{2(3+\alpha)}S_{h,l}^{-(3+\alpha)}r^{2(3+\alpha)}\geq 0,\ \ \ \text{for all }r\in[0,R_{1}]\ \ \text{and }R_{1}^2<S_{h,l}^{\frac{3+\alpha}{\alpha+2}}.
		\label{eq:3.2}
	\end{equation}
	\begin{lemma}[label=lemma:3.1]
		The functional $J^{T}$ has the following properties:
		\begin{enumerate}
			\item[(i)] $J^{T}\in C^{1}(H_{0}^{1}(\Omega_{r}),\mathbb R)$;
			\item[(ii)] $J^{T}$ is coercive and bounded from below on $S_{r,b}$. Moreover, if $J^{T}(u)\leq 0$, then $\|\nabla u\|_{2}\leq R_{1}$ and $J^{T}(u)=J(u)$;
			\item[(iii)] $J$ satisfies the $(PS)_{d}$ condition on $S_{r,b}$.
		\end{enumerate}
	\end{lemma}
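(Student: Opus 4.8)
The plan is to dispatch (i) and (ii) by direct estimates and to concentrate the real effort on the Palais--Smale analysis in (iii). For (i) I would split $J^T$ into the Dirichlet term $\frac12\|\nabla u\|_2^2$, the subcritical term $\frac1p\int_{\Omega_r}|u|^p\,\d x$ (which is $C^1$ since $p\in(2,\frac83)\subset(2,6)$), the reduced Bopp--Podolsky term $\frac14\int_{\Omega_r}q(x)u^2\Phi(u)\,\d x$ (which is $C^1$ by the reduction of Section~2 together with \zcref[S]{lemma:2.3}), and the truncated Choquard term. For the last factor, note that $u\mapsto\|\nabla u\|_2$ is $C^1$ on $H_0^1(\Omega_r)\setminus\{0\}$ while $\xi(\|\nabla u\|_2)\equiv1$ in a neighbourhood of $u=0$, so $u\mapsto\xi(\|\nabla u\|_2)$ is $C^1$ on all of $H_0^1(\Omega_r)$; since the Choquard integral is $C^1$ by the Hardy--Littlewood--Sobolev inequality~\eqref{eq:2.9*}, the product is $C^1$ and (i) follows.

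For (ii), on $S_{r,b}$ I would bound $J^T$ below by the scalar profile $\widetilde h$. Using $\int_{\Omega_r}q(x)u^2\Phi(u)\,\d x=\|\Phi(u)\|_{\mathbb H}^2\geq0$, the Gagliardo--Nirenberg inequality (\zcref[S]{lemma:2.2*}) on the $L^p$ term, and the definition of $S_{h,l}$ on the Choquard term, one gets $J^T(u)\geq\widetilde h(\|\nabla u\|_2)$. Because $p\in(2,\frac83)$ forces $\frac{3(p-2)}2<2$ and $\xi\equiv0$ for $t\geq R_2$, the profile $\widetilde h(t)$ grows like $\frac12t^2$ at infinity, so it is coercive and bounded below, hence so is $J^T$ on $S_{r,b}$. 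If $J^T(u)\leq0$ then $\widetilde h(\|\nabla u\|_2)\leq0$, and since $\widetilde h(t)>0$ for $t>R_1$ one must have $\|\nabla u\|_2\leq R_1$, on which range $\xi\equiv1$ and therefore $J^T(u)=J(u)$.

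The core is (iii), which I would establish at levels $d<0$ (the only range the minimax scheme requires), where by (ii) the truncated and untruncated functionals coincide. Let $\{u_n\}\subset S_{r,b}$ be a $(PS)_d$ sequence with $d<0$. For large $n$, $J^T(u_n)<0$ gives $\|\nabla u_n\|_2\leq R_1$, so $\{u_n\}$ is bounded and, since $\xi\equiv1$ and $\xi'\equiv0$ there, the constrained criticality reads $J'(u_n)-\lambda_nu_n\to0$ in $H^{-1}(\Omega_r)$. Testing with $u_n$ and using $\|u_n\|_2^2=b^2$ together with the boundedness of every term shows $\{\lambda_n\}$ is bounded. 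Passing to a subsequence, $u_n\rightharpoonup u$ in $H_0^1(\Omega_r)$, $u_n\to u$ in $L^s(\Omega_r)$ for $s\in[1,6)$ and a.e.; extending by zero to $\mathbb R^3$ and applying \zcref[S]{lemma:2.4} and \zcref[S]{lemma:2.5} (the mass at infinity vanishes because $\Omega_r$ is bounded) yields concentration measures with $\mu\geq|\nabla u|^2+\sum_j\mu_j\delta_{z_j}$, $\nu=(I_\alpha*|u|^{3+\alpha})|u|^{3+\alpha}+\sum_j\nu_j\delta_{z_j}$ and $\mu_j\geq S_{h,l}\nu_j^{1/(3+\alpha)}$.

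To rule out concentration I would test $J'(u_n)-\lambda_nu_n\to0$ against $\psi_\varepsilon u_n$, with $\psi_\varepsilon$ a standard cutoff equal to $1$ near $z_j$ and supported in a ball of radius $\varepsilon$. Letting $n\to\infty$ and then $\varepsilon\to0$, the subcritical, Bopp--Podolsky, gradient-commutator and Lagrange contributions all vanish at $z_j$ (no atoms, with \zcref[S]{lemma:2.3} handling the nonlocal term), leaving $\mu_j=\nu_j$. Coupled with $\mu_j\geq S_{h,l}\nu_j^{1/(3+\alpha)}$, this gives either $\nu_j=0$ or $\nu_j\geq S_{h,l}^{(3+\alpha)/(\alpha+2)}$; the latter forces $\mu_j\geq S_{h,l}^{(3+\alpha)/(\alpha+2)}$, contradicting $\mu_j\leq\limsup_n\|\nabla u_n\|_2^2\leq R_1^2<S_{h,l}^{(3+\alpha)/(\alpha+2)}$ from~\eqref{eq:3.2}. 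Hence $\nu_j=0$ for every $j$, the Choquard term converges, and testing $J'(u_n)-\lambda_nu_n\to0$ against $u_n-u$ (using strong $L^p$ convergence, \zcref[S]{lemma:2.3}, and the recovered Choquard convergence) yields $\|\nabla(u_n-u)\|_2\to0$. The delicate point throughout is exactly this exclusion of concentration: it is where the critical Choquard growth breaks compactness, and it succeeds only because the quantitative smallness $R_1^2<S_{h,l}^{(3+\alpha)/(\alpha+2)}$ built into~\eqref{eq:3.2} undercuts the concentration threshold, with the boundedness of $\{\lambda_n\}$ and the careful treatment of the cutoff terms as the remaining technical hurdles.
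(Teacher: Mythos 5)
Your proposal is correct and follows essentially the same route as the paper: (i) by standard arguments, (ii) by comparison with the truncated profile $\widetilde h$, and (iii) by using (ii) to confine $\|\nabla u_n\|_2\leq R_1$ at negative levels, extracting bounded Lagrange multipliers, extending by zero and applying \zcref[S]{lemma:2.4} and \zcref[S]{lemma:2.5}, and excluding concentration atoms via the smallness condition~\eqref{eq:3.2} before concluding strong convergence. The only notable difference is that the paper's proof of (iii) also embeds the scaling $u_t=t^{3/2}u(tx)$ and Ekeland's variational principle to produce a specific negative-level Palais--Smale sequence and to fix the domain size $r=1/t_0$, material you rightly omit since it is extraneous to the compactness property the lemma actually asserts.
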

	\begin{proof}
		(i) can be easily proved by standard argument.
		
		For (ii), if $\|\nabla u\|_{2}>R_{1}$ and considering $\|\nabla u\|_{2}\to+\infty$, we have
		\begin{align*}
			J^{T}(u)\geq\dfrac{1}{2}\|\nabla u\|_{2}^2-\dfrac{1}{p}C_{N,p}b^{p-\frac{3(p-2)}{2}}\|\nabla u\|_{2}^{\frac{3(p-2)}{2}}-\dfrac{\xi(\|\nabla u\|_{2})}{2(3+\alpha)}S_{h,l}^{-(3+\alpha)}\|\nabla u\|_{2}^{2(3+\alpha)}.
		\end{align*}
		Then, we can get $J^{T}$ is coercive. According to the definition of $\widetilde{h}(t)$, we derive $J^{T}(u)\geq \widetilde{h}(\|\nabla u\|_{2})=h(\|\nabla u\|_{2})$ when $\|\nabla u\|_{2}\in[0,R_{1}]$. Thus, $J^T$ is bounded from below on $S_{r,b}$.
		
		For (iii), define
		$$d:=\liminf_{u\in S_{r,b}}J^{T}(u).$$
		For any $u\in S_{1,b}$, the scaling
		\begin{equation}
			u_{t}(x): = t^{\frac{3}{2}}u(tx)\ \ \ \ \text{for}\ \ \ \ t\in\mathbb R,\ \ \  x\in\mathbb R^3
			\label{eq:3.2*}
		\end{equation}
		preserves the $L^2$-norm, i.e.,
		\begin{align*}
			\|u_{t}(x)\|_{2}=\int_{\Omega_{r}}|t^{\frac{3}{2}}u(tx)|^2\d x=t^3\int_{\Omega_{\frac{1}{t}}}|u(tx)|^2\d x=\int_{\Omega}|u(y)|^2dy=b^2.
		\end{align*}
		
		By using H\"{o}lder inequality, and $H_{0}^{1}(\Omega_{r})\hookrightarrow L^{6}(\Omega_{r})\hookrightarrow L^2(\Omega_{r})$, we have
		\begin{align*}
			\int_{\Omega_{r}}|\Phi(u_{t})|^2\d x&\leq \left(\int_{\Omega_{r}}|\Phi(u_{t})|^{6}\d x\right)^{\frac{1}{3}}|\Omega_{r}|^{\frac{2}{3}} = \left(\int_{\Omega_{r}}|\Phi(u_{t})|^{6}\d x\right)^{\frac{1}{3}}(r^3|\Omega|)^{\frac{2}{3}}\\
			&= r^2\left(\int_{\Omega_{r}}|\Phi(u_{t})|^{6}\d x\right)^{\frac{1}{3}}|\Omega|^{\frac{2}{3}} \\
			&\leq r^2\left(S^{-3}\|\nabla \Phi(u_{t})\|_{2}^{6}\right)^{\frac{1}{3}}|\Omega|^{\frac{2}{3}} = r^2S^{-1}\|\nabla \Phi(u_{t})\|_{2}^2|\Omega|^{\frac{2}{3}},
		\end{align*}
		where $S^{\frac{1}{2}}$ is the constant in Sobolev embedding $D^{1,2}(\mathbb R^3)\hookrightarrow L^{6}(\mathbb R^3)$.
		
		Since
		\begin{align*}
			\int_{\Omega_{r}}|\nabla\Phi(u_{t})|^2\d x\leq \int_{\Omega_{r}}(|\nabla\Phi(u_{t})|^2+|\Delta\Phi(u_{t})|^2)\d x=\int_{\Omega_{r}}q(x) u_{t}^2\Phi(u_{t})\d x,
		\end{align*}
		we get
		\begin{equation}
			\begin{aligned}
				\int_{\Omega_{r}}q(x)u_{t}^2\Phi(u_{t})\d x&\leq \|q(x)\|_\infty\|u_{t}^2\|_{2}\|\Phi(u_{t})\|_{2}\leq \|q(x)\|_\infty\|u_{t}\|_{4}^2(rS^{-\frac{1}{2}}|\Omega|^{\frac{1}{3}})\|\nabla\Phi(u_{t})\|_{2}\\
				&\leq \|q(x)\|_\infty S^{-\frac{1}{2}}|\Omega|^{\frac{1}{3}}r\|u_{t}\|_{4}^2\left(\int_{\Omega_{r}}q(x)u_{t}^2\Phi(u_{t})\d x\right)^{\frac{1}{2}}.
			\end{aligned}
			\label{eq:*}
		\end{equation}
		Note that
		\begin{equation}
			\begin{aligned}
				\|u_{t}\|_{4}^2 & =\left(\int_{\Omega_{r}}|u_{t}|^{4}\d x\right)^{\frac{1}{2}}=\left(\int_{\Omega_{r}}|t^{\frac{3}{2}}u(tx)|^{4}\d x\right)^{\frac{1}{2}}\\
				& =t^3\left(\int_{\Omega_{r}}|u(tx)|^{4}\d x\right)^{\frac{1}{2}}=t^{\frac{3}{2}}\left(\int_{\Omega}|u(x)|^{4}\d x\right)^{\frac{1}{2}}.
			\end{aligned}
			\label{eq:**}
		\end{equation}
		Let $r = \frac1t$, and from~\eqref{eq:*} and \eqref{eq:**} we obtain
		\[
			\int_{\Omega_{\frac1t}} q(x) u_t^2 \Phi(u_t) \d x \leq
			\ab(\|q(x)\|_\infty S^{-\frac12}|\Omega|^{\frac13}) t^{\frac12}
			\ab(\int_\Omega |u|^4 \d x)^{\frac12} \ab(\int_{\Omega_{\frac1t}} q(x) u_t^2 \Phi(u_t) \d x)^{\frac12}.
		\]
		Denote $\tilde c := \|q(x)\|_\infty S^{-\frac12}|\Omega|^{\frac13}$, it
		follows that
		\begin{equation}
			\int_{\Omega_{\frac1t}} q(x) u_t^2 \Phi(u_t) \d x \leq \tilde c^2 t\int_{\Omega}|u|^4 \d x.
			\label{eq:3.6}
		\end{equation}
	
		On the other hand, take $u_0 \in S_{1,b}$, substitute~\eqref{eq:3.2*} into $J^T$,
		and consider $t$ is relatively small, we derive from~\eqref{eq:3.6} that
		\begin{align*}
			J^T(u_t(x)) & = \frac12 \int_{\Omega_{\frac1t}} |\nabla u_t|^2 \d x
		- \dfrac{1}{p}\int_{\Omega_{\frac1t}} |u_t|^{p} \d x + \frac14 \int_{\Omega_{\frac1t}} q(x)u_t^2 \Phi(u_t) \d x\\
		& \quad - \frac{t^{3 + 3\alpha}}{2(3 + \alpha)} \int_\Omega (I_\alpha * |u_0|^{3+\alpha}) |u_0|^{3+\alpha} \d x\\
		& \leq \frac{t^2}{2} \int_\Omega |\nabla u_0|^2 \d x - \dfrac{1}{p}t^{\frac{3p}{2}-3}
		\int_\Omega |u_0|^{p} \d x + \frac{\tilde c^2}{4}t \int_\Omega |u_0|^4 \d x.
		\end{align*}
	By Ekeland's variational principle, there exists a sequence $\{u_n\} \subset S_{r,b}$, which is a Palais-Smale (PS) sequence for $J^T(u)$. Since $p\in\ab(2,\frac38)$,
	then there exists $t_0 > 0$ small enough such that $J^T(u_{t_0}(x)) < 0$.

	Now, we fix $r = \frac1{t_0}$ (hence $\Omega_r = \Omega _{\frac1{t_0}}$).
	Thus, we have determined the sizes of $\Omega_r$.
	Moreover, we know that for sufficiently large $n$, $J^T(u_n) < 0$, so $\|\nabla u_n\|_2 \leq R_1$ from (ii), and $J(u_n) = J^T(u_n)$. Thus $\{u_n\}$ is a $(PS)_d$ sequence of $J$, i.e.,
	\[
		J(u_n) \to d < 0, \quad \|J_{s,b}'(u_n)\| \to 0, \quad \text{as $n \to \infty$}.
	\]
Since we have already known that $\Omega_r = \Omega_{\frac1{t_0}}$, for convenience, we will use $\Omega_r$ to replace $\Omega_{\frac1{t_0}}$ in the following context.

	Furthermore, we know that $\{u_n\}$ is a bounded sequence in $H_0^1(\Omega_r)$,
	so there exists a subsequence (still denoted $\{u_n\}$) and $\bar u \in H_0^1(\Omega_r)$ such that
	\begin{equation}
		u_n \rightharpoonup \bar u \quad \text{in $H_0^1(\Omega_r)$},
		\quad
		u_n \rightharpoonup \bar u \quad \text{in $L^t(\Omega_r)$},
		\quad \text{for $2 \leq t < 6$}.
		\label{eq:3.7}
	\end{equation}
	Thus, from $p \in (2, \frac38)$, we obtain
	\[
		\lim_{n\to\infty} \int_{\Omega_r} |u_n|^{p} \d x
	= \int_{\Omega_r} |\bar u|^{p} \d x.
	\]
	
	Next, we prove $\bar u \neq 0$. Suppose by contradiction that $\bar u = 0$, then
	\[
		\lim_{n\to\infty} \int_{\Omega_r} |u_n|^{p} \d x
		= \int_{\Omega_r} |\bar u|^{p} \d x=0.
	\]
	Further, we have
	\begin{align*}
		0 > d & = \lim_{n\to\infty} J^T(u_n) = \lim_{n\to\infty} J(u_n)\\
		& \geq \lim_{n\to\infty} \ab(\frac12 \int_{\Omega_r} |\nabla u_n|^2 \d x
		- \dfrac{1}{p} \int_{\Omega_r} |u_n|^{p} \d x
		- \frac1{2(3 + \alpha)} S_{h,l}^{-(3+\alpha)}\|\nabla u_n\|_2^{2(3+\alpha)})\\
		& \geq -\dfrac{1}{p} \lim_{n\to\infty} \int_{\Omega_r} |u_n|^{p} \d x = 0,
	\end{align*}
	which is a contradiction. Thus, $\bar u \neq 0$.
	
	By Proposition 5.2 in~\cite{29}, for any $\varphi \in H_0^1(\Omega_r)$, there exists $\{\lambda_n\} \subset \mathbb R$ such that
	\begin{equation}
		\begin{aligned}
			\int_{\Omega_r} \nabla u_n \nabla \varphi \d x
		- \int_{\Omega_r} |u_n|^{p-2}u_n \varphi \d x
		+ \int_{\Omega_r} q(x) \Phi(u_n) u_n \varphi\d x\\
		- \int_{\Omega_r} (I_\alpha * |u_n|^{3+\alpha})|u_n|^{1+\alpha} u_n \varphi \d x
		- \lambda_n \int_{\Omega_r} u_n \varphi \d x = 0.
		\end{aligned}
		\label{eq:3.8*}
	\end{equation}
	Take $\varphi = u_n$, we get
	\begin{equation}
		\begin{aligned}
			\int_{\Omega_r} |\nabla u_n|^2 \d x
		- \int_{\Omega_r} |u_n|^{p} \d x
		+ \int_{\Omega_r} q(x) u_n^2 \Phi(u_n) \d x\\
		- \int_{\Omega_r} (I_\alpha * |u_n|^{3+\alpha}) |u_n|^{3+\alpha} \d x
	= \lambda_n \int_{\Omega_r} u_n^2 \d x = \lambda_n b^2.
		\end{aligned}
		\label{eq:3.9}
	\end{equation}
	Moreover,
	\begin{equation}
		\int_{\Omega_r} |u_n|^{p} \d x \leq
		\widehat c \|\nabla u_n\|_2^{p},
	\end{equation}
	where $\widehat c > 0$ is a constant.
	By applying~\eqref{eq:2.6*} and \eqref{eq:2.6}, we obtain
	\begin{equation}
		\ab(\int_{\Omega_r} q(x) u_n^2 \Phi(u_n) \d x)^{\frac12}
	= \|\Phi(u_n)\|_{\mathbb H} \leq C_2 \int_{\Omega_r} |\nabla u_n|^2\d x.
	\end{equation}
	
	In addition, by the definition of $S_{h,l}$, we obtain
	\begin{equation}
		\int_{\Omega_r} \ab(I_\alpha * |u_n|^{3+\alpha}) |u_n|^{3+\alpha} \d x
		\leq S_{h,l}^{3+\alpha} \ab(\int_{\Omega_r} |\nabla u_n|^2 \d x)^{3+\alpha}.
		\label{eq:3.12*}
	\end{equation}
	Combining with~\eqref{eq:3.9}-\eqref{eq:3.12} and the boundedness of $\{u_n\}$ in $H_0^1 (\Omega_r)$,
	it is easy to see that $\{\lambda_n\}$ is bounded. Up to a subsequence, we may
	assume $\lambda_n \to \lambda \in \mathbb R$.
	By~\eqref{eq:3.7}, it follows that
	\begin{equation}
		-\Delta \bar u + q(x)\Phi(\bar u) \bar u = \lambda \bar u + (I_\alpha * |\bar u|^{3+\alpha}) |\bar u|^{1+\alpha} \bar u + |\bar u|^{p-2}\bar u.
		\label{eq:3.12}
	\end{equation}
	Indeed, for any $\varphi \in H_0^1(\Omega_r)$, form $u_n \rightharpoonup \bar u$ in $H_0^1(\Omega_r)$ and $u_n \to \bar u$ in $L^p(\Omega_r)$
	for $p \in [2, 6)$, it follows that
	\begin{gather}
		\int_{\Omega_r} \nabla u_n \nabla\varphi \d x \to
		\int_{\Omega_r} \nabla \bar u \nabla \varphi \d x \quad \text{as $n \to \infty$},\label{eq:3.13}\\
		\int_{\Omega_r} |u_n|^{p-2}u_{n}\varphi \d x \to
		\int_{\Omega_r} |\bar u|^{p-2}\bar u \varphi \d x \quad \text{as $n \to \infty$},\\
		\lambda_n \int_{\Omega_r} u_n\varphi \d x \to
		\lambda \int_{\Omega_r} \bar u \varphi \d x \quad \text{as $n \to \infty$}.
	\end{gather}
	By~\zcref[S]{lemma:2.3}, we obtain
	\begin{equation}
		\int_{\Omega_r} q(x) \Phi(u_n) u_n \varphi \d x \to
		\int_{\Omega_r} q(x) \Phi(\bar u) \bar u \varphi \d x.
	\end{equation}
	By Sobolev embedding, we know that $u_n \rightharpoonup u$ weakly in $L^6(\Omega_r)$,
	$u_n \rightharpoonup u$ a.e. in $\Omega_r$. Then
	\[
		|u_n|^{3+\alpha} \rightharpoonup |\bar u|^{3+\alpha} \quad
		\text{weakly in $L^\frac{6}{3+\alpha}(\Omega_r)$}
	\]
	as $n \to \infty$. In view of~\eqref{eq:2.9*}, we know that
	the sequence $I_\alpha * |u_n|^{3+\alpha}$ is bounded in $L^{\frac{6}{3-\alpha}} (\Omega_r)$. So,
	\[
		I_\alpha * |u_n|^{3+\alpha} \rightharpoonup I_\alpha * |\bar u|^{3+\alpha}
		\quad \text{weakly in $L^{\frac{6}{3-\alpha}}(\Omega_r)$}
	\]
	as $n \to \infty$. Combing this and the fact that
	\[
		|u_n|^{3+\alpha} u_n \rightharpoonup |\bar u|^{3+\alpha}
		\quad
		\text{weakly in $L^{\frac{6}{2+\alpha}}(\Omega_r)$}
	\]
	as $n \to \infty$, we derive at
	\[
		(I_\alpha * |u_n|^{3+\alpha})|u_n|^{1+\alpha} u_n
		\rightharpoonup (I_\alpha * |\bar u|^{3+\alpha}) |\bar u|^{1+\alpha} u
		\quad \text{weakly in $L^{\frac65}(\Omega_r)$}
	\]
	as $n \to \infty$. Therefore, we have, for any $\varphi \in H_0^1(\Omega_r)$,
	\begin{equation}
		\int_{\Omega_r} (I_\alpha * |u_n|^{3+\alpha}) |u_n|^{1+\alpha}
		u_n\varphi \d x \to
		\int_{\Omega_r} (I_\alpha * |\bar u|^{3+\alpha}) |\bar u|^{1+\alpha} u\varphi \d x.
		\label{eq:3.17}
	\end{equation}
	
	So from~\eqref{eq:3.13}-\eqref{eq:3.17}, we get~\eqref{eq:3.12}.
	Next, we prove that
	\begin{gather*}
		\int_{\Omega_r} |\nabla u_n|^2 \d x \to \int_{\Omega_r} |\nabla \bar u| \d x,\\
		\int_{\Omega_r} (I_\alpha * |u_n|^{3+\alpha})|u_n|^{3+\alpha} \d x
		\to
		\int_{\Omega_r} (I_\alpha * |\bar u|^{3+\alpha}) |\bar u|^{3+\alpha} \d x.
	\end{gather*}
	We regard the sequence $u_n$ in $H_0^1(\Omega_r)$
	as elements of $H^1(\mathbb R^3)$ by extending them via zero outside $\Omega_r$, then we apply the compactness principle.
	For sufficiently large $n$, $\|\nabla u_n\| \leq R_1$; by~\zcref[S]{lemma:2.4} and~\zcref[S]{lemma:2.5}, there exist positive measures $\mu$, $\nu$,
	and $\mu$, $\nu$ are zero outside $\Omega_r$, such that
	\[
		|\nabla u_n|^2 \rightharpoonup \mu, \quad
		(I_\alpha * |u_n|^{3+\alpha})|u_n|^{3+\alpha} \rightharpoonup \nu
	\]
	as $n \to \infty$.
	
	Setting the functional $\Psi(v): H_0^1(\Omega_r) \to \mathbb H$ given by $\Psi(v) = \frac12\int_{\mathbb R^3}v^2\d x$.
	Therefore, we can see that $S_{r,b} = \Psi^{-1}\ab(\ab\{\frac{b^2}{2}\})$.

 First, we show that $I$ is empty.
 By the conservation of the mass, $I$ is not infinite. If $I$ is nonempty and finite, then
$\mu_j$ and $\nu_j$ are positive measures derived from the~\zcref[S]{lemma:2.4}.

	Let $\varphi \in C_0^\infty (\mathbb R^3)$ be a cut-off function such that $\varphi \in [0,1]$, $\varphi \equiv 1$ in $B_{\frac12}(0)$, and $\varphi \equiv 0$ in $\mathbb R^3\backslash B_1(0)$. For any $\rho > 0$, set
	\[
		\varphi_\rho (x) := \varphi\ab(\frac{x - x_j}{\rho})
	= \begin{cases}
		1, & \text{if $|x - x_j| \leq \frac\rho2$},\\
		0, & \text{if $|x - x_j| \geq \rho$}.
	\end{cases}
	\]
	
	Since $\{u_n\}$ is bounded in $H_0^1(\Omega_r)$, $\{\varphi_pu_n\}$ is also bounded in $H_0^1(\Omega_r)$. Thus
	\begin{equation}
		\begin{aligned}
			o_n(1) & = \braket<J'(u_n) - \lambda_n \Psi'(u_n), \varphi_\rho u_n>\\
	& = \int_{\mathbb R^3} \varphi_\rho |\nabla u_n|^2 \d x
		+ \int_{\mathbb R^3} u_n \nabla u_n \nabla \varphi_\rho \d x
		- \int_{\mathbb R^3} |u_n|^{p} \varphi_\rho u_n \d x\\
	& \quad+ \int_{\mathbb R^3} q(x) u_n^2 \Phi(u_n) \varphi_\rho \d x
		- \lambda_n \int_{\mathbb R^3} u_n^2 \varphi_\rho \d x
		- \int_{\mathbb R^3} (I_a*|u_n|^{3+\alpha}) |u_n|^{3+\alpha}
			\varphi_\rho \d x.
		\end{aligned}
		\label{eq:3.18}
	\end{equation}
	First,
	we regard $u_n$ and $\bar u$ as functions in $H^1(\mathbb R^3)$ (with zero extension outside $\Omega_r$).
	Combing the absolute continuity of the Lebesgue integral and $u_n \rightharpoonup \bar u$ in $H_0^1(\Omega_r)$, $u_n\rightharpoonup \bar u$ in $L^p(\Omega_r)$ for $p \in [2,6)$, we obtain from~\zcref[S]{lemma:2.4} that
	\begin{gather}
		\lim_{\rho\to0} \lim_{n\to0} \int_{\mathbb R^3} u_n \nabla u_n \nabla \varphi_\rho \d x = 0,\\
		\lim_{\rho \to 0} \lim_{n\to\infty} \int_{\mathbb R^3}
		|u_n|^{p} \varphi_\rho = \lim_{\rho\to 0} \int_{\mathbb R^3} |\bar u|^{p} \varphi_\rho \d x = 0,\\
		\lim_{\rho\to0} \lim_{n\to\infty} \int_{\mathbb R^3} q(x) u_n^2 \Phi(u_n) \varphi_\rho
	= \lim_{\rho\to0} \int_{\mathbb R^3} q(x) \bar u^2 \Phi(\bar u) \varphi_\rho = 0,
	\end{gather}
	and
	\begin{equation}
		\lim_{\rho\to0} \lim_{n\to\infty} \int_{\mathbb R^3} \varphi_\rho u_n^2 \d x = 0.
		\label{eq:3.21}
	\end{equation}
	
	Then, by \eqref{eq:3.18}-\eqref{eq:3.21}, \zcref[S]{lemma:2.4} and the definition of $\varphi_\rho$, it follows that
		\begin{align*}
			& \lim_{\rho \to 0} \ab\{\int_{\mathbb R^3} (I_\alpha * |u|^{3+\alpha})|u|^{3+\alpha}\varphi_\rho + \sum_{j\in I} \nu_j \delta_{x_j} \varphi_\rho \d x\}\\
		= & \lim_{\rho \to 0} \int_{\mathbb R^3} \varphi_\rho \d \nu
		= \lim_{\rho\to0} \lim_{n\to\infty} \int_{\mathbb R^3} (I_a * |u_n|^{3+\alpha})
			|u_n|^{3+\alpha} \varphi_\rho \d x\\
		= & \lim_{\rho \to 0} \lim_{n\to\infty} \int_{\mathbb R^3} |\nabla u_n|^2
		\varphi_\rho \d x = \lim_{\rho\to0} \int_{\mathbb R^3} \varphi_\rho \d \mu\\
		\geq & \lim_{\rho\to 0} \ab\{\int_{\mathbb R^3} |\nabla u|^2 \varphi_\rho \d x + \sum_{j\in I} \mu_j \delta_{x_j} \varphi_\rho \d x\} = \mu_j.
		\end{align*}
	So we can deduce that $\nu_j \geq \mu_j$.
	Combining with~\eqref{eq:2.10},
	we can derive that $\mu_j \geq S_{h,l} \nu_j^{\frac1{3+\alpha}} \geq S_{h,l} \mu_j^{\frac1{3+\alpha}}$.
	Consequently, one has $\mu_j \geq S_{h,l} \mu_j^{\frac1{3+\alpha}}$, thus
	\[
		R_1^2 \geq \lim_{\rho\to 0} \lim_{n\to\infty}
		\|\nabla u_n\|_2^2 \geq \lim_{\rho\to0} \lim_{n\to\infty}
		\int_{\mathbb R^3} |\nabla u_n|^2 \varphi_\rho \d x
	= \lim_{\rho \to0} \int_{\mathbb R^3} \varphi_\rho \d \mu \geq 
		\mu_j \geq S_{h,l}^{\frac{3+\alpha}{\alpha+2}},
	\]
	which contradicts~\eqref{eq:3.2}, then $I$ is an empty set.
	
	Considering the fact that $\mu$ and $\nu$ vanish outside $\Omega_r$, we have $\nu_\infty = M_\infty = 0$.
	By the~\zcref[S]{lemma:2.5}, we further obtain
	\[
		\int_{\mathbb R^3}(I_\alpha * |u_n|^{3+\alpha}) |u_n|^{3+\alpha} \d x \to
		\int_{\mathbb R^3} (I_a*|\bar u|^{3+\alpha}) |\bar u|^{3+\alpha} \d x.
	\]
	Since $u_n \rightharpoonup \bar u$, taking $\varphi = \bar u$ in equation~\eqref{eq:3.8*}, then subtracting~\eqref{eq:3.9}, we obtain
	\begin{align*}
		&\bigg(\int_{\mathbb R^3} |\nabla u_n|^2 \d x + \int_{\mathbb R^3} q(x) \Phi(u_n) u_n^2 \d x - \int_{\mathbb R^3} |u_n|^{p} \d x - \lambda_n \int_{\mathbb R^3} u_n^2 \d x\\
		&- \int_{\mathbb R^3} (I_a * |u_n|^{3+\alpha}) |u_n|^{3+\alpha} \d x\bigg)\\
		&- \bigg(\int_{\mathbb R^3} |\nabla \bar u|^2 \d x +
		\int_{\mathbb R^3} q(x) \Phi(\bar u) \bar u^2 \d x -
		\int_{\mathbb R^3} |\bar u|^{p}\d x\\
		&- \lambda \int_{\mathbb R^3} \bar u^2\d x
		- \int_{\mathbb R^3} (I_a * |\bar u|^{3+\alpha})|\bar u|^{3+\alpha}  \d x\bigg) = o_n(1).
	\end{align*}
	Since $u_n \rightharpoonup \bar u$ in $H_0^1(\Omega_r)$, $u_n \to \bar u$
	in $L^p(\Omega_r)$, where $p \in [2,6)$; and (i) of~\zcref[S]{lemma:2.3}, we can obtain
	\[
		\lim_{n\to\infty} \int_{\Omega_r} |\nabla u_n|^2 \d x =
		\int_{\Omega_r} |\nabla \bar u|^2 \d x,
	\]
  i.e., $u_n \to \bar u \quad \text{in $H_0^1(\Omega_r)$}$.
	\end{proof}
	
	For any $\varepsilon > 0$, we define the set
	\[
		(J^T)^{-\varepsilon} := \{u \in S_{r,b}: J^T(u) \leq -\varepsilon\}
		\subset H_0^1(\Omega_r).
	\]
	Since $J^T(u)$ is coercive and even on $H_0^1(\Omega_r)$, we know $(J^T)^{-\varepsilon}$ is closed and symmetric. To prove~\zcref[S]{thm:1.1},
	we present the following two lemmas. The proof of~\zcref[S]{lemma:3.2} can be completed by analogy with Lemma 3.2 in~\cite{alves2021multiplicity}.
	\begin{lemma}[label=lemma:3.2]
		Given $k \in \mathbb N$, there exist $\varepsilon_k = \varepsilon(k) > 0$
		such that $\gamma((J^T)^{-\varepsilon}) \geq k$ for all $0 < \varepsilon \leq \varepsilon_k$.
	\end{lemma}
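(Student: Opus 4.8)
The goal is to show that the sublevel set $(J^T)^{-\varepsilon}$ carries arbitrarily large genus once $\varepsilon$ is chosen small enough depending on $k$. The natural strategy is to exhibit an explicit odd continuous map from a sphere $S^{k-1}$ (which has genus exactly $k$) into $(J^T)^{-\varepsilon}$, and then invoke the monotonicity and subadditivity properties of the genus. Concretely, the plan is to fix a $k$-dimensional subspace $E_k \subset H_0^1(\Omega_r)$ — for instance, spanned by $k$ functions with mutually disjoint compact supports inside $\Omega_r$ — and to build from it a test family sitting on the constraint $S_{r,b}$.

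First I would normalize onto the sphere: for $w$ in the unit sphere of $E_k$ (with respect to, say, the $L^2$ norm, suitably rescaled so that $\|w\|_2^2 = b^2$), consider the scaled functions $w_t(x) = t^{3/2} w(tx)$ introduced in~\eqref{eq:3.2*}, which preserve the $L^2$-mass and hence keep us on $S_{r,b}$. The key estimate is the one already established in the proof of~\zcref[S]{lemma:3.1}: substituting the scaling into $J^T$ and using~\eqref{eq:3.6} yields, for $t$ small, a bound of the form
\[
J^T(w_t) \leq \frac{t^2}{2}\int_\Omega |\nabla w|^2 \d x
- \frac{1}{p}\, t^{\frac{3p}{2}-3}\int_\Omega |w|^p \d x
+ \frac{\tilde c^2}{4}\, t \int_\Omega |w|^4 \d x.
\]
Because $E_k$ is finite-dimensional, all norms on it are equivalent, so the quantities $\int_\Omega |\nabla w|^2\d x$, $\int_\Omega |w|^p \d x$ and $\int_\Omega |w|^4 \d x$ are uniformly bounded above and the $L^p$-term is uniformly bounded below by a positive constant over the (compact) unit sphere of $E_k$. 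Since $p \in (2,\tfrac83)$ forces $\tfrac{3p}{2}-3 < 1$ and $\tfrac{3p}{2}-3 < 2$, the negative term $-\tfrac1p t^{\frac{3p}{2}-3}\int_\Omega|w|^p\d x$ dominates the two positive terms as $t\to 0^+$. Thus there exist $t_k>0$ small and $\varepsilon_k>0$ such that $J^T(w_{t_k}) \leq -\varepsilon_k$ uniformly for every $w$ on the unit sphere of $E_k$.

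This gives an odd (by homogeneity of the scaling and evenness of $J^T$), continuous map $w \mapsto w_{t_k}$ from the unit sphere $S^{k-1}$ of $E_k$ into $(J^T)^{-\varepsilon_k}$. By the standard genus properties (monotonicity under odd continuous maps, together with $\gamma(S^{k-1}) = k$), I would conclude $\gamma\bigl((J^T)^{-\varepsilon_k}\bigr) \geq \gamma(S^{k-1}) = k$. Finally, since $(J^T)^{-\varepsilon} \supseteq (J^T)^{-\varepsilon_k}$ whenever $0 < \varepsilon \leq \varepsilon_k$, monotonicity of the genus yields $\gamma\bigl((J^T)^{-\varepsilon}\bigr) \geq k$ for all such $\varepsilon$, as claimed.

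The main obstacle I anticipate is obtaining the bounds \emph{uniformly} over the sphere of $E_k$ rather than pointwise: one must ensure the threshold $t_k$ and the level $\varepsilon_k$ can be chosen independently of $w$. This is where finite-dimensionality of $E_k$ is essential, since it furnishes a positive uniform lower bound on $\int_\Omega |w|^p \d x$ and uniform upper bounds on the competing terms; a lower bound of $J^T$ alone would not suffice. A secondary technical point is verifying that the scaled functions $w_{t_k}$ genuinely remain supported in $\Omega_r$ (recall $r=1/t_0$ is already fixed), which follows by choosing $t_k$ compatibly with the support constraint, exactly as in the determination of the domain size in~\zcref[S]{lemma:3.1}.
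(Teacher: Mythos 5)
Your construction is essentially the one the paper points to (the paper gives no proof of this lemma, deferring to Lemma 3.2 of the cited Alves--Ji--Miyagaki paper, which is set on all of $\mathbb R^N$): a finite-dimensional sphere of disjointly supported functions on the constraint, pushed to uniformly negative energy by the mass-preserving scaling \eqref{eq:3.2*} using that $\tfrac{3p}{2}-3\in(0,1)$ for $p\in(2,\tfrac83)$, followed by monotonicity of the genus. The scaling estimate, the use of compactness of the sphere in $E_k$ to get uniform constants, the oddness of the map, and the final inclusion $(J^T)^{-\varepsilon}\supseteq(J^T)^{-\varepsilon_k}$ are all correct.

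The gap is precisely the point you label as secondary. On the whole space the scaling is harmless, but here it is not: if $E_k\subset H_0^1(\Omega_r)$ with $r=1/t_0$ already fixed, then for $t_k<1$ the function $w_{t_k}(x)=t_k^{3/2}w(t_kx)$ is supported in $\tfrac{1}{t_k}\,\mathrm{supp}(w)$, which in general is not contained in $\Omega_r$; so your map does not land in $S_{r,b}$ at all. Moreover, "choosing $t_k$ compatibly with the support constraint" is circular: if you force the supports to fit by first placing the generators of $E_k$ inside the shrunken copy $\Omega_{t_kr}$ and then scaling up, the two scalings compose, and the construction collapses to scaling unit-domain functions by exactly $t_0$ --- that is, $t_0$ is the only scaling parameter actually available. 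Uniform negativity $\sup_{c\in S^{k-1}}J^T\bigl(\sum_i c_i(u_i)_{t_0}\bigr)\le-\varepsilon_k$ then forces $t_0$ to be small in a way that depends on $k$: by Faber--Krahn, $k$ disjointly supported functions of mass $b$ in $\Omega$ satisfy $\max_i\|\nabla u_i\|_2^2\ge Ck^{2/3}$, while Gagliardo--Nirenberg bounds the favorable $L^p$-term by a power of the gradient term, so the ratio that determines the admissible range of $t_0$ degenerates as $k\to\infty$. Consequently the lemma cannot be obtained with $r$ fixed beforehand by the single-function argument of Lemma 3.1(iii); a correct proof must fix $t_0$ (hence $\Omega_r$, and with it $R_1$ and $b^*$) only after $k$ is given, small enough that the entire $(k-1)$-sphere of scaled functions lies in $(J^T)^{-\varepsilon_k}$. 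This is consistent with the quantifier order of Theorem 1.1 ("for any given $k$\ldots"), but it makes Lemma 3.2 a constraint on the choice of the domain rather than a free-standing consequence of it --- a structural point your write-up (and, to be fair, the paper's own exposition) leaves unresolved.
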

		For any $\varepsilon_k > 0$, we define the set
		\[
			\Sigma_k := \{D \subset S_{r,b}:\ \text{$D$ is closed and symmetric, $\gamma(D) \geq k$}\}.
		\]
		By part (ii) of~\zcref[S]{lemma:3.1}, we define the minimax value
		\[
			d_k := \inf_{D\in\Sigma_k} \sup_{u\in D} J^T (u) > -\infty
		\]
		for all $k \in \mathbb  N$.
		In addition, we define the set
		\[
			K_d := \{u \in S_{r,b}: (J^T)'(u) = 0,\ J^T(u) = d\}.
		\]
	Then, we can prove the following result.
	\begin{lemma}[label=lemma:3.3]
		If $d = d_k = d_{k+1} = \ldots = d_{k+l}$, then
		$\gamma(K_d) \geq l + 1$. In particular, $J^T$ has at least $l + 1$
		non-trivial critical points.
		\begin{proof}
			For $\varepsilon > 0$, it is easy to see $(J^T)^{-\varepsilon} \subset \Sigma$.
			For any fixed $k \in \mathbb N$, by~\zcref[S]{lemma:3.2}, we have $(J^T)^{-\varepsilon} \in \Sigma_k$. Moreover,
			$d_k \leq \sup\limits_{u\in(J^T)^{-\varepsilon_k}} J^T(u) = -\varepsilon_k < 0$.

			If $0 > d = d_k = d_{k+1} = \ldots = d_{k+l}$, then part (iii) of the~\zcref[S]{lemma:3.1} implies $J^T$ satisfies the Palais-Smale $(PS)_d$ condition on $S_{r,b}$ for $d<0$.
			Therefore, we deduce that $K_d$ is compact.
			By Theorem 2.1 in~\cite{28}, there are at least $l + 1$ non-trivial critical points for $J^T\big|_{S_{r,b}}$.
		\end{proof}
	\end{lemma}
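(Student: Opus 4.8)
The plan is to run the classical Clark/Rabinowitz genus multiplicity scheme on the $C^1$ constraint manifold $S_{r,b}$, drawing on the structural facts about $J^T$ collected in~\zcref[S]{lemma:3.1} and on the genus lower bound of~\zcref[S]{lemma:3.2}. Before the main argument I would first pin down that the common value is a negative, PS-admissible level and that $K_d$ is compact. Since $\Sigma_{k+1}\subset\Sigma_k$, the values satisfy $d_1\le d_2\le\cdots$; by~\zcref[S]{lemma:3.2} the set $(J^T)^{-\varepsilon_k}$ belongs to $\Sigma_k$, so $d_k\le-\varepsilon_k<0$, while part~(ii) of~\zcref[S]{lemma:3.1} keeps each $d_j$ finite. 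Thus the common value $d=d_k=\cdots=d_{k+l}$ is strictly negative, and part~(iii) of~\zcref[S]{lemma:3.1} supplies the $(PS)_d$ condition. I would then check that $K_d$ is compact: $J^T$ is even, so $(J^T)'$ is odd and $K_d=-K_d$; moreover $0\notin S_{r,b}\supset K_d$; any sequence in $K_d$ is a $(PS)_d$ sequence, bounded by coercivity, with a convergent subsequence whose limit lies again in $K_d$ by continuity of $(J^T)'$. Hence $\gamma(K_d)$ is well defined and finite, and by the neighbourhood property of the genus there is an open symmetric set $U\supset K_d$ with $\gamma(\overline U)=\gamma(K_d)$.

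The core of the proof is a contradiction argument. Suppose $\gamma(K_d)\le l$. Writing $(J^T)^c:=\{u\in S_{r,b}:J^T(u)\le c\}$, the equivariant deformation lemma on the $C^1$-manifold $S_{r,b}$---valid because $(PS)_d$ holds and there are no critical points in the set $\{u:|J^T(u)-d|\le\bar\varepsilon\}\setminus U$---furnishes some $\bar\varepsilon\in(0,-d)$ and an odd homeomorphism $\eta:S_{r,b}\to S_{r,b}$ with $\eta\bigl((J^T)^{d+\bar\varepsilon}\setminus U\bigr)\subset(J^T)^{d-\bar\varepsilon}$. By the definition of $d_{k+l}=d$ there exists $D\in\Sigma_{k+l}$ with $\sup_{D}J^T<d+\bar\varepsilon$, so $D\subset(J^T)^{d+\bar\varepsilon}$. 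Subadditivity of the genus, applied to $D\subset\overline{D\setminus U}\cup\overline U$, gives $\gamma(\overline{D\setminus U})\ge\gamma(D)-\gamma(\overline U)\ge(k+l)-l=k$; since $\eta$ is an odd continuous map, $\gamma\bigl(\eta(\overline{D\setminus U})\bigr)\ge\gamma(\overline{D\setminus U})\ge k$, so $\eta(\overline{D\setminus U})\in\Sigma_k$. But $\eta(\overline{D\setminus U})\subset(J^T)^{d-\bar\varepsilon}$ forces $d_k\le\sup_{\eta(\overline{D\setminus U})}J^T\le d-\bar\varepsilon<d=d_k$, a contradiction. Therefore $\gamma(K_d)\ge l+1$.

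Finally, $\gamma(K_d)\ge l+1$ forces $K_d\neq\emptyset$, and a compact symmetric set avoiding the origin whose genus is at least $l+1$ must contain at least $l+1$ distinct points (indeed infinitely many once $l\ge1$, since a finite symmetric set has genus $1$); each such point is a non-trivial critical point of $J^T|_{S_{r,b}}$ at the level $d$, which is exactly the asserted multiplicity.

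I expect the main obstacle to be the construction and precise statement of the odd (equivariant) deformation lemma on the constraint manifold $S_{r,b}$ rather than on the ambient space: one must build a negative pseudo-gradient flow tangent to $S_{r,b}$, keep it odd so that the genus survives passage through $\eta$, and excise exactly the symmetric neighbourhood $U$ of $K_d$ so that the flow drives the remainder of the sublevel set strictly below $d$. The accompanying genus bookkeeping---monotonicity, subadditivity $\gamma(A\cup B)\le\gamma(A)+\gamma(B)$, the invariance $\gamma(A)\le\gamma(\eta(A))$ under odd maps, and the tubular-neighbourhood property around $K_d$---is routine once the compactness of $K_d$ is secured, which is why I would nail down that compactness (from $(PS)_d$ together with coercivity) at the very start.
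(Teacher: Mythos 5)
Your proposal is correct, and its skeleton coincides with the paper's: both first secure that the common level is strictly negative (via \zcref[S]{lemma:3.2}, which puts $(J^T)^{-\varepsilon_k}$ in $\Sigma_k$ and forces $d_k \leq -\varepsilon_k < 0$), then invoke the $(PS)_d$ condition of part (iii) of \zcref[S]{lemma:3.1} and deduce compactness of $K_d$. The difference lies in the final step. The paper stops there and cites Theorem 2.1 of \cite{28} (essentially the abstract genus-multiplicity principle already recorded as the Proposition in Section 2, quoted from \cite{peng2024normalized}) as a black box. You instead re-prove that abstract principle: you take a symmetric neighbourhood $U$ of $K_d$ with $\gamma(\overline U) = \gamma(K_d)$, assume $\gamma(K_d) \leq l$ for contradiction, apply an odd deformation $\eta$ on $S_{r,b}$ pushing $(J^T)^{d+\bar\varepsilon}\setminus U$ into $(J^T)^{d-\bar\varepsilon}$, and use monotonicity plus subadditivity of the genus to contradict $d_k = d$. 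This is the classical Clark/Rabinowitz argument and it is carried out correctly; in particular your choice $\bar\varepsilon \in (0,-d)$ is exactly what keeps the deformation inside the region where the Palais--Smale condition is available (and where $J^T = J$ by part (ii) of \zcref[S]{lemma:3.1}), a point the paper never has to make explicit because it is buried inside the cited theorem. What each route buys: the paper's proof is shorter by outsourcing the deformation machinery; yours is self-contained at the level of the minimax scheme, but the equivariant deformation lemma on the $C^1$ constraint sphere $S_{r,b}$ --- which you correctly identify as the main technical obstacle --- is itself invoked rather than proved, so both arguments ultimately rest on one standard external ingredient, just taken at different levels of abstraction. Your closing remark that $\gamma(K_d)\geq l+1$ yields at least $l+1$ (indeed, for $l\geq 1$, infinitely many) critical points, since a finite symmetric set avoiding the origin has genus $1$, is also correct and matches the paper's conclusion.
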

	\noindent
	\textbf{Proof of~\zcref[S]{thm:1.1}.}
	By part (ii) of~\zcref[S]{lemma:3.1}, the critical points of $J$ are critical points of $J^T$. So from~\zcref[S]{lemma:3.3}, the proof of~\zcref[S]{thm:1.1} completes.
	\hfill\qedsymbol

	\section{Declarations}

	\paragraph{Acknowledgements}
	Li Wang was supported by National Natural Science Foundation of China (Grant Nos. 12161038, 12301584), Jiangxi Provincial Natural Science Foundation (Grant No. 20232BAB201009), Science and Technology Project of Jiangxi Provincial Department of Education (Grant No. GJJ2400901).

	\paragraph{Author Contributions}
	All authors have accepted reponsibility for the entire content of this version of the manuscript and consented to its submission to the journal, reviewed all the results and approved the final version of the manuscript.
	Li Chen proposed research ideas and writing original draft preparation.
	Li Wang provided this idea for the study and led the implementation review and revision of the manuscript. All authors have read and agreed to the published version of the manuscript.

	\paragraph{Conflict of interest}
	The authors declare that they have no conflict of interest.

\end{document}